\DeclarePairedDelimiter\ceil{\lceil}{\rceil}
\newcommand{\R}{\mathbb{R}}
\newcommand{\ds}{\displaystyle}
\newcommand{\x}{{\bf x}}
\newcommand{\p}{{\bf p}}
\newtheorem{Theorem}{Theorem}[section]
\newtheorem{Lemma}{Lemma}[section]
\newtheorem{Corollary}{Corollary}[section]
\newtheorem{remark}{Remark}[section]
\newtheorem*{Assumption*}{Assumption}
\newtheorem{problem}{Problem}[section]
\newtheorem*{problem*}{Problem}
\numberwithin{equation}{section}
\begin{document}

\title{The Carleman-based contraction principle to  reconstruct  the potential of nonlinear hyperbolic equations }

\author{Dinh-Liem Nguyen\thanks{Department of Mathematics, Kansas State University, Manhattan, KS 66506, USA, dlnguyen@ksu.edu, trungt@ksu.edu.}  \and  
 Loc H. Nguyen\thanks{Department of Mathematics and Statistics, University of North Carolina at
Charlotte, Charlotte, NC 28223, USA, loc.nguyen@uncc.edu.}
\and Trung Truong\footnotemark[1]
}

\date{}
\maketitle
\begin{abstract} 
 We develop  an efficient and  convergent numerical method for solving the inverse problem of determining  the potential of  nonlinear hyperbolic equations from lateral Cauchy  data.  In our numerical method we  construct a sequence of linear Cauchy problems whose corresponding solutions  converge to a function that can be used to efficiently compute an approximate solution  to the inverse problem of interest.   The convergence analysis   is established by combining  the contraction principle and Carleman estimates.  We numerically solve the  linear Cauchy  problems using a quasi-reversibility method.   Numerical examples are presented to illustrate the efficiency of the method. 

\end{abstract}

\noindent{\it Key words:} coefficient inverse problem, numerical reconstruction, Carleman estimate, contraction principle, nonlinear hyperbolic equations.

\noindent{\it AMS subject classification:} 35R30, 65M32

\section{Introduction}
Let $d \geq 2$ be the spatial dimension. 
Let $T$ be a positive number representing the final time.
Let $F: \R^d \times \R \times \R \times \R^d \to \R$ and $p:\R^d \to \R$ 
be smooth functions. 
 Consider the following nonlinear hyperbolic equation
\begin{equation}
	\left\{
		\begin{array}{rcll}
			u_{tt}(\x, t) &=& \Delta u(\x, t) + c(\x) F(\x, u, u_t, \nabla u) &(\x, t) \in \R^d \times (0, T),\\
			u(\x, 0) &=& p(\x) &\x \in \R^d,\\
			u_t(\x, 0) &=& 0 &\x \in \R^d.
		\end{array}
	\right.
	\label{waveeqn}
\end{equation}
Here,  $c: \R^d \to \R$ is the unknown potential of the medium.
The unique solvability of problem \eqref{waveeqn} is out of the scope of this paper.
We consider it as an assumption. 
For the completeness, we provide here a simple set of conditions on $c$, p and $F$ that guarantee the well-posedness of \eqref{waveeqn}. Consider the case when $p$ and $c$ are smooth and have compact support.
 If $F$ does not depend on the first derivatives of $u$,   $|F(\x, u)| \leq c_1 |u| + c_2$ for all $u \in H^2(\Omega \times (0,T))$ and $\x \in \R^d$  for some positive constants $c_1$ and $c_2$, then the unique solvability of problem \eqref{waveeqn} was established in~\cite{Lions:sp1972}. 
We also refer the reader to the approach based on Garlerkin approximations and energy estimates 
 in \cite[Section 7.2]{Evans:PDEs2010} and \cite{Ladyzhenskaya:sv1985} for the unique solvability of \eqref{waveeqn} for the 
case when \eqref{waveeqn} is linear.  Also in the linear case, we refer the reader to \cite[Chapter 10]{Brezis:Springer2011} 
for some results on existence, uniqueness and regularity  of the wave equation. In this paper, we only focus on 
our approach for solving the following  inverse problem.

\begin{problem} Let $\Omega$ be an open and bounded domain of $\R^d$ with smooth boundary.
	Assume that $F(\x, p, 0, \nabla p) \not = 0$ for all $\x \in \overline \Omega$ and that $c(\x)$ is compactly supported in  $\Omega.$
	Given the lateral Cauchy data
	\begin{equation}
		f(\x, t) = u(\x, t) \quad 
		\mbox{and}
		\quad
		g(\x, t) = \partial_{\nu} u(\x, t)
		\label{data}
	\end{equation}  for all $(\x, t) \in \partial \Omega \times [0, T],$
	determine the potential $c(\x)$ in $\Omega$. 
	\label{p}
\end{problem}

This  problem belongs to the class of inverse problems for nonlinear partial differential equations (PDEs) which 
has received an increasing attention in the recent years. Since many materials in
real-world applications obey nonlinear laws, inverse problems for nonlinear PDEs  arise from the  question of how to
  image these materials and/or  determine their physical parameters.   
However,  results on inverse problems for nonlinear models are  quite limited when comparing with those 
on inverse problems for  linear models.
We refer to~\cite{Harju2015} 
for a result on a linearized inverse scattering problem for nonlinear Schr\"odinger equations. A more recent work  
on the numerical solution to a (linear) inverse source problem for nonlinear parabolic equations can be found in~\cite{LeNguyen:2020}.
Due to the presence of the nonlinearity, the uniqueness of Problem \ref{p} is still open. 
We consider the uniqueness result as an assumption.
 We refer to \cite{Beilina2015, BukhgeimKlibanov:smd1981, KlibanovMalinsky:ip1991, Rakesh2020} and references therein  for the uniqueness and stability results and reconstruction methods for coefficient inverse problems for linear hyperbolic equations.
  We also draw the reader's attention to \cite{Runde2008, Serov2008, Isako2011, Kuryl2018, Wang2019, Claud2020}
and references therein for some uniqueness and stability results for inverse problems for several types of nonlinear PDEs. 

The conventional approaches to solve nonlinear inverse problems are based on optimization.
That means, one introduces some mismatch functionals and then finds their minimizers. 
The found minimizers are set to be the solutions to the nonlinear inverse problems.
However, in reality, these functionals might have multiple local minima. 
Finding the minimizers, that are close to the desired solutions, requests good initial guesses.
Even when  good initial guesses are given, the local convergence does not always hold unless some additional conditions are imposed.
We refer the reader to \cite{Hankeetal:nm1995} for a condition that guarantees the local convergence of the optimization method using Landweber iteration.
Unlike this,
in this paper, we propose a new method to solve the nonlinear coefficient inverse problem for nonlinear PDEs, formulated in Problem \ref{p}, without requesting a good initial guess. 
Our approach is to employ a suitable Carleman weight function to construct a contraction map.
The fixed point of this map directly provides an approximate solution to Problem \ref{p}. The contractional phenomenon is proved by using a Carleman estimate.
Besides the proposed method,
there is a general framework to solve nonlinear inverse problems without requesting good initial guesses. It is named as convexification. See \cite{KlibanovNik:ra2017, KhoaKlibanovLoc:SIAMImaging2020, Klibanov:sjma1997, Klibanov:nw1997, Klibanov:ip2015,  KlibanovKolesov:cma2019, KlibanovLiZhang:ip2019, KlibanovLeNguyenIPI2021,  KlibanovLiZhang:SIAM2019, LeNguyen:preprint2021}
for several versions of the convexification method. 
These works have
been developed since the convexification method was originally introduced in \cite{KlibanovIoussoupova:SMA1995}. 
Especially, the convexification was successfully tested with experimental data in \cite{VoKlibanovNguyen:IP2020, Khoaelal:IPSE2021, KlibanovLeNguyenIPI2021, KlibanovKolNguyen:SISC2019} for the inverse scattering problem in the frequency domain given only back scattering data.
Although effective, the convexification method has a drawback. 
It is time consuming. 
In contrast, due to the use of the contraction principle, our new method can quickly deliver a reliable solution to Problem \ref{p}. 
By ``quickly", we mean that the convergence of our method is $O(\theta^k)$ for some $\theta \in (0, 1)$ and $k$ is the number of iteration of our algorithm.

Our method in this paper to solve   a (nonlinear) coefficient inverse problem
for nonlinear hyperbolic equations can be considered as a serious extension of the numerical method  in~\cite{LeNguyen:2020}, which studied an inverse source problem for parabolic equations. 
 Unlike the   convergence analysis  in~\cite{LeNguyen:2020} which was studied for noiseless data we 
consider in this paper the case when the data for the inverse problem has  noise in it.  
We prove that the stability of our method with respect to noise is Lipschitz. Moreover, the nonlinearity considered in this present paper
is also more general than that of the paper~\cite{LeNguyen:2020}. More precisely, the convergence analysis in~\cite{LeNguyen:2020} requires that the nonlinearity term in the parabolic equation  has to be independent of the partial derivatives of 
the  function $u$. Therefore, the extended study in  our paper involves  more  technical difficulties in the convergence analysis of 
the numerical method. 
We also refer the reader to the  publications \cite{Baudouin2013, BAUDOUIN:SIAMNumAna:2017, Baudouin:SIAM2021} for similar approaches. 

In our numerical method   we first derive an approximate Cauchy problem 
for the inverse problem~\ref{p}. The  derivation consists of eliminating the coefficient $c(\x)$ from 
the hyperbolic equation and  approximately transforming the resulting equation into the frequency domain using truncated Fourier series with respect to a special basis,  first introduced in~\cite{Klibanov:jiip2017}.
After the first step, we obtain a system of quasi-linear elliptic equations for the vector $U$ consisting of Fourier coefficients.
The main aim of the second step is to solve this system. 
Writing this system as the form $U = \Phi(U)$ for some map $\Phi$, we recursively define a sequence $\{U_k\}_{k \geq 0}$ as $U_{k+1} = \Phi(U_k)$ where the initial term $U_0$ can be efficiently computed.
The rigorous proof of the convergence of the sequence $\{U_k\}_{k \geq 0}$ to the fixed-point of $\Phi$ is an important strength of this paper. This can be done due to the presence of a Carleman weight function in the definition of $\Phi$. 
Besides the analysis proof, we prove the efficiency of the fixed-point method above by several interesting numerical results.
%
%

%

The paper is organized as follows. In Section~\ref{sec2} we derive an approximate Cauchy problem for Problem \ref{p}. 
Section~\ref{sec3} is dedicated to the construction of the computational algorithm and the convergence analysis of the  method. 
We present numerical examples to illustrate the efficiency of the method in Section~\ref{secNum}. 
Section \ref{sec_concluding} is for some concluding remarks.

\section{An approximate Cauchy problem for Problem \ref{p}} 
\label{sec2}
In this section we derive an approximate Cauchy problem for the inverse problem~\ref{p}. This Cauchy problem serves as an important part for the numerical method  studied in the next section. 
The idea of the derivation  is to eliminate the potential $c(\x)$ from the nonlinear hyperbolic problem~\eqref{waveeqn} and then approximate the resulting equation using a special Fourier basis. 

At the time $t = 0$, we read the hyperbolic equation in \eqref{waveeqn} as
\begin{equation}
	u_{tt}(\x, 0) = \Delta u(\x, 0) + c(\x) F(\x, u(\x, 0), u_t(\x, 0), \nabla u(\x, 0)) = 0	
	\label{2.1}
\end{equation}
for all $\x \in \R^d.$
 Recall from the statement of Problem \ref{p} that $F(\x, p(\x), 0, \nabla p(\x)) \not = 0$ for all $\x \in \overline \Omega.$
 Since $u(\x, 0) = p$ and $u_t(\x, 0) = 0$, it follows from \eqref{2.1} that
 \begin{equation}
 	c(\x) =  \frac{u_{tt}(\x, 0) - \Delta p(\x)}{F(\x, p(\x), 0, \nabla p(\x))}
	\quad 
	\mbox{for all } \x \in \Omega.
	\label{2.2}
 \end{equation}
 Plugging \eqref{2.2} into the hyperbolic equation in \eqref{waveeqn}, we have
\begin{equation}
	u_{tt}(\x, t) = \Delta u(\x, t) 
	+  \frac{\big(u_{tt}(\x, 0) - \Delta p(\x)\big)F(\x, u, u_t, \nabla u)}{F(\x, p(\x), 0, \nabla p(\x))}  
	\quad \mbox{for all } (\x, t) \in\Omega \times (0, T).
	\label{2.3}
\end{equation}

Computing a function $u$, satisfying \eqref{2.3} and the boundary data \eqref{data}, becomes the main aim of this paper.
In fact, having the function $u$ in hand, we can directly compute the potential $c$ via \eqref{2.2}.
However, solving the differential equation \eqref{2.3} is not trivial due to the presence  the complicated nonlinear term $\frac{\big(u_{tt}(\x, 0) - \Delta p(\x)\big)F(\x, u, u_t, \nabla u)}{F(\x, p(\x), 0, \nabla p(\x))}$ which involves a non local function $u_{tt}(\x, 0)$. 
We propose to solve \eqref{2.3} in an approximation context.
Let $\{\Psi_n\}_{n \geq 1}$ be an orthonormal basis of $L^2[0, T]$.
For each $(\x, t) \in \overline \Omega \times [0, T]$, we approximate the wave function $u(\x, t)$ by truncating its Fourier series with respect to $\{\Psi_n\}_{n \geq 1}$ as
\begin{equation}
	u(\x, t) = \sum_{n = 1}^\infty u_n(\x) \Psi_n(t)
	\simeq \sum_{n = 1}^N u_n(\x) \Psi_n(t)
	\label{2.4}
\end{equation}
where
\begin{equation}
	u_n(\x) = \int_0^T u(\x, t) \Psi_n(t)dt \quad n = 1, 2, \dots, N.
	\label{2.5}
\end{equation}
The choice of the cut-off number $N$ will be presented later in Section \ref{secNum}.
Plugging the approximation \eqref{2.4} into \eqref{2.3}, we have
\begin{multline}
	\sum_{n = 1}^N u_n(\x) \Psi_n''(t) 
	\simeq \sum_{n = 1}^N \Delta  u_n(\x) \Psi_n(t)
	+  
	\frac{\sum_{n = 1}^N u_n(\x) \Psi_n''(0) - \Delta p(\x)}{F(\x, p(\x), 0, \nabla p(\x))}
	\\	
	\times	F\Big(\x, \sum_{n = 1}^N u_n(\x) \Psi_n(t), \sum_{n = 1}^N u_n(\x) \Psi_n'(t),  \sum_{n = 1}^N \nabla u_n(\x) \Psi_n(t)\Big)	
	\label{2.6}
\end{multline}
for all $(\x, t) \in \Omega \times (0, T).$ 

\begin{remark}
From now on, we assume that \eqref{2.6} is valid with a suitable choice of $N$, presented later in Section \ref{secNum}.
The analysis to prove \eqref{2.6} as $N \to \infty$ is extremely challenging. 
It is out of the scope of this paper.
Although the rigorous study of the asymptotic behavior of \eqref{2.6} as $N$ large is missing, we do not experience any difficulty in our numerical study.
We refer the readers to \cite{KhoaKlibanovLoc:SIAMImaging2020, LeNguyen:2020, LeNguyenNguyenPowell:JOSC2021, KlibanovLeNguyen:SIAM2020, Nguyen:CAMWA2020, TruongNguyenKlibanov:arxiv2020, Nguyens:jiip2020} for the successful use of similar approximations when the basis $\{\Psi_n\} $ is given in \cite{Klibanov:jiip2017}.
\label{rem2.1}
\end{remark}

From now on, we replace the approximation $``\simeq"$ in \eqref{2.6} by the equality $``="$. 
For each $m \in \{1, 2, \dots, N\}$, multiply $\Psi_m(t)$ to both sides of \eqref{2.6} and then integrate the resulting equation. We obtain
\begin{equation}
	\sum_{n = 1}^N s_{mn} u_n(\x) = \Delta u_m(\x)+ \mathcal F(\x, u_1, u_2, \dots, u_N, \nabla u_1, \cdots, \nabla u_N) 
	\label{2.7}.
\end{equation}
for all $\x \in \Omega, m \in \{1, 2, \dots, N\}$
where 
\[
	s_{mn} = \int_0^T \Psi_n''(t) \Psi_m(t) dt
	\quad 1 \leq m, n \leq N,
\]
 and
\begin{multline*}
	\mathcal F(\x, u_1, u_2, \dots, u_N, \nabla u_1, \cdots, \nabla u_N) 
	=
	\int_0^T \frac{\sum_{n = 1}^N u_n(\x) \Psi_n''(0) - \Delta p(\x)}{F(\x, p(\x), 0, \nabla p(\x))}
	\\
	\times 
	F\Big(\x, \sum_{n = 1}^N u_n(\x) \Psi_n(t), \sum_{n = 1}^N u_n(\x) \Psi_n'(t),  \sum_{n = 1}^N \nabla u_n(\x) \Psi_n(t)\Big)\Psi_m(t)dt.
\end{multline*}
Denote by $U$  the vector $(u_1, \dots, u_N)^{\rm T}$ and $S$ the matrix $(s_{mn})_{n = 1}^N$. We rewrite \eqref{2.7} as 
\begin{equation}
	\Delta U(\x) - SU(\x) + \mathcal F(\x, U(\x), \nabla U(\x)) = 0
	\quad
	\mbox{for all } \x \in \Omega.
	\label{2.8}
\end{equation}

By \eqref{data} and \eqref{2.5}, the Dirichlet and Neumann boundary conditions for the vector $U$ is given by
\begin{equation}
	\left\{
		\begin{array}{ll}
			U(\x) = {\bf f}(\x) := \Big(\ds\int_{0}^T f(\x, t) \Psi_n(t)dt\Big)_{n = 1}^N,\\
			\partial_{\nu} U(\x) = {\bf g}(\x) :=  \Big(\ds\int_{0}^T g(\x, t) \Psi_n(t)dt\Big)_{n = 1}^N,
		\end{array}
	\right. 
	\quad \mbox{for all } \x \in \partial \Omega.
	\label{2.9}
\end{equation}

In order to compute an approximation of a solution to \eqref{2.3} satisfying \eqref{data}, we solve \eqref{2.8}--\eqref{2.9}. 
Then, we compute $u(\x, t)$ via \eqref{2.4}.
We solve \eqref{2.8}--\eqref{2.9} by a new Carleman-based contraction principle.

In practice, the data $f$ and $g$ are measured.
These two functions might contain noise. 
Hence, the boundary data ${\bf f}$ and ${\bf g}$ for the vector valued function $U$ in \eqref{2.9} are noisy, too.
In this case, in order to study problem \eqref{2.8}--\eqref{2.9}, we have to assume that the set of admissible solutions
\[
	H = \big\{
		V \in H^s(\Omega)^N: V|_{\partial \Omega} = {\bf f}, 
		\partial_{\nu} V|_{\partial \Omega} = \mathbf{g}
	\big\}
\]
is nonempty.
Let $f^*$, ${\bf f}^*$, $g^*$ and ${\bf g}^*$ be the noiseless versions of $f$, ${\bf f}$, $g$ and ${\bf g}$ respectively.
Since  ${\bf f}^*$ and ${\bf g^*}$ contain no noise, without loss of the generality, we can assume that 
\begin{equation}
\left\{
	\begin{array}{ll}
		\Delta U^*(\x) - SU^*(\x) + \mathcal F(\x, U^*(\x), \nabla U^*(\x)) = 0
	&
	 \text{ in }  \Omega,\\
	U^* = {\bf f}^* & \text{ on } \partial \Omega,\\
	\partial_{\nu} U^* = {\bf g}^* &  \text{ on } \partial \Omega
	\end{array}
\right.
\label{true}
\end{equation}
has a unique solution $U^*$. 
Since $H$ is nonempty, so is the set
\[
	E = \{ \mathbf{e} \in H^s(\Omega)^N,  \mathbf{e}|_{\partial \Omega} = {\bf f} - {\bf f}^*, \partial_{\nu}  \mathbf{e}|_{\partial \Omega} = {\bf g} - {\bf g}^*\}.
\] 
Let $\delta > 0$ be the noise level. By noise level, we assume that
\begin{equation}
	\inf \{\|\mathbf e\|_{H^s(\Omega)^N}: {\bf e} \in E\} < \delta.
	\label{noise}
\end{equation}
 Assumption \eqref{noise} implies that there exists an ``error" vector valued function ${\bf e} $ satisfying
\begin{equation}
	\left\{
		\begin{array}{ll}
			{\bf e}|_{\partial \Omega} = {\bf f} - {\bf f}^*,\\
			\partial_{\nu} {\bf e}|_{\partial \Omega} = {\bf g} - {\bf g}^*,\\
			\|{\bf e}\|_{H^s(\Omega)^N} < 2\delta.
		\end{array}
	\right.
	\label{errorfunction}
\end{equation}

\begin{remark}
	Since $ {\bf f} - {\bf f}^*$ and ${\bf g} - {\bf g}^*$ are the traces of a vector valued function ${\bf e} \in H^s(\Omega)^N$ as in \eqref{errorfunction}, the noisy data is assumed to be smooth.
 This condition is needed only for the proof of the convergence theorem (see Theorem \ref{thm1}). Our numerical study in Section~\ref{secNum} indicates that the numerical method is able to provide reasonable reconstruction results for nonsmooth noisy data.
 
    \label{remnoise}
\end{remark}

\begin{remark}
	Since problem  \eqref{2.8}--\eqref{2.9} is  an overdetermined problem with noisy Cauchy data, it  might not have a solution. However, our method delivers a  function that  well approximates $U^*.$
\end{remark}

\section{The Carleman-based contraction principle}
\label{sec3}
In this section we present the computational algorithm for solving the inverse problem~\ref{p} and the convergence analysis of the algorithm. 
Recall that at the end of Section \ref{sec2}, we reduced Problem \ref{p} to the problem of computing a vector valued function $U$ satisfying \eqref{2.8}--\eqref{2.9}. 
Let $s > \ceil{d/2} + 2$ where $\ceil{d/2}$ is the smallest integer that is greater than or equal to $d/2$. 
We have $H^s(\Omega)$ is continuously embedded into $C^2(\overline \Omega).$ 
Assume that the set of admissible solutions 
\begin{multline*}
	H = \Big\{
		U \in H^s(\Omega)^N: 
		U(\x) = \Big(\ds\int_{0}^T f(\x, t) \Psi_n(t)dt\Big)_{n = 1}^N,
		\\
		\mbox{and }
		\partial_{\nu} U(\x) =  \Big(\ds\int_{0}^T g(\x, t) \Psi_n(t)dt\Big)_{n = 1}^N
		\, \mbox{for all } \x \in \partial \Omega
	\Big\}
\end{multline*}
is nonempty.
Let $\x_0$ be a point in $\R^d \setminus \Omega$. 
Define $r(\x) = |\x - \x_0|/b$ where $b > \max_{\x \in \overline \Omega} \{|\x - \x_0|\}$. 
Fix a regularization parameter $\varepsilon > 0.$
For each $\lambda > 1$ and $\beta > 1$,  define
the functional
\begin{equation}
	J_{\lambda, \beta}(V)(\varphi) = \int_{\Omega}e^{2\lambda r^\beta(\x)} |\Delta \varphi - S \varphi + \mathcal F(\x, V, \nabla V)|^2 d\x 
		+ \varepsilon \|V\|_{H^s(\Omega)^N}^2.
	\label{J}
\end{equation}
Let
$
	\Phi_{\lambda, \beta}: H \to H
$ be defined 
as follows
\[
	\Phi_{\lambda, \beta}(V) = \underset{\varphi \in H}{\mbox{argmin}}	
	J_{\lambda, \beta}(V)(\varphi)
\]
The map $\Phi_{\lambda, \beta}$ is well-defined. 
In fact,
the existence of the minimizer of $J_{\lambda, \beta}$ in $H$ can be proved by the standard arguments in analysis.
Since $H^s(\Omega)^N$ is compactly embedded into $H^2(\Omega)^N$, $J_{\lambda, \beta}$ is  weakly lower semi continuous in $H$. Due the the presence of the regularization term $\varepsilon \|V\|_{H^s(\Omega)^N}^2$, $J_{\lambda, \beta}$ is coercive. 
Hence, $J_{\lambda, \beta}$ has a minimizer in the close and convex set $H$. 
The minimizer  is unique because $J_{\lambda, \beta}$ is strictly convex.

Motivated by the contraction principle, we recursively construct sequence $\{U_k\}_{k \geq 1}$ as in Algorithm \ref{alg}. 
\begin{algorithm}
	\caption{\label{alg} A computational algorithm for solving Problem \ref{p}}
	\begin{algorithmic}[1]
	\STATE \label{step0} Take any vector valued function $U_0 \in H$.
	\STATE \label{step1}Assume by induction that $U_{k - 1}$ is known. Define $U_k = \Phi_{\lambda, \beta}(U_{k - 1})$.
	\STATE Set $U_{\rm comp} = U_{k^*} = (u^{\rm comp}_1, \dots, u^{\rm comp}_{N})$ for some $k^*$ sufficiently large and 
	\begin{equation}
		u^{\rm comp}(\x, t) = \sum_{n = 1}^N u^{\rm comp}_n(\x) \Psi_n(t) 
		\quad \mbox{for all} (\x, t) \in \Omega \times (0, T).
		\label{ucomp}
	\end{equation}
	\STATE Due to \eqref{2.2} and \eqref{ucomp}, set
	\begin{equation}
 	c^{\rm comp}(\x) =  \frac{\sum_{n = 1}^N u^{\rm comp}_n(\x) \Psi_n''(0)  - \Delta p(\x)}{F(\x, p(\x), 0, \nabla p(\x))}
	\quad 
	\mbox{for all } \x \in \Omega.
	\label{ccomp}
 \end{equation}
 as the computed solution to Problem \ref{p}
 	\end{algorithmic}
\end{algorithm}

The following Carleman estimate plays important role to prove the convergence of the sequence $\{U_k\}_{k \geq 1}$.
We refer the reader to Theorem 3.1 and Corollary 3.8  in \cite{LeNguyen:2020} for its proof.
\begin{Lemma}[Carleman estimate]
	There exists a positive constant $\beta_0$ depending only on $b$, $\x_0$, $\Omega$ and $d$ such that for all function $h \in C^2(\overline \Omega)$ satisfying
	\begin{equation}
		h(\x) = \partial_{\nu} h(\x) = 0 \quad \mbox{for all } 
		\x \in \partial \Omega,
		\label{3.1}
	\end{equation}
	the following estimate holds true
	\begin{equation}
		\int_{\Omega} e^{2\lambda r^\beta(\x)}|\Delta h(\x)|^2\,d\x
		\geq
		 C\lambda  \int_{\Omega}  e^{2\lambda r^\beta(\x)}|\nabla h(\x)|^2\,d\x
		+ C\lambda^3  \int_{\Omega}   e^{2\lambda r^\beta(\x)}(\x)|h(\x)|^2\,d\x	
		\label{Car est}	
	\end{equation}
	for all $\beta \geq \beta_0$ and $\lambda \geq \lambda_0$. Here, $\lambda_0 = \lambda_0(b, \Omega, d, \x_0, \beta) > 1$ and $C = C(b, \Omega, d, \x_0, \beta) > 1$ are constants depending only on the listed parameters.
	\label{lemCar}
\end{Lemma}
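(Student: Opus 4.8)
The statement is a pointwise Carleman estimate for the Laplacian with the weight $e^{2\lambda r^\beta}$, under homogeneous Cauchy boundary conditions. Since the paper explicitly attributes the result to Theorem 3.1 and Corollary 3.8 of \cite{LeNguyen:2020}, my plan is to reconstruct the standard proof along those lines. The natural approach is the classical Carleman/Bukhgeim--Klibanov technique: introduce the substitution $w = e^{\lambda r^\beta} h$, so that $h = e^{-\lambda r^\beta} w$ and $\Delta h = e^{-\lambda r^\beta}\bigl(\Delta w - 2\lambda \nabla(r^\beta)\cdot\nabla w + (\lambda^2|\nabla(r^\beta)|^2 - \lambda\Delta(r^\beta))\,w\bigr)$. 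Then $\int_\Omega e^{2\lambda r^\beta}|\Delta h|^2\,d\x = \int_\Omega |\Delta w + A_\lambda w + B_\lambda\cdot\nabla w|^2\,d\x$ for the appropriate first-order and zeroth-order coefficients, and the goal becomes to bound this $L^2$ norm from below by $C\lambda\int_\Omega|\nabla w|^2 + C\lambda^3\int_\Omega|w|^2$ up to transferring the weight back.

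First I would compute the conjugated operator and expand the square, isolating the dominant terms in $\lambda$; the cross terms produce, after integration by parts, a quadratic form in $(w,\nabla w)$ whose leading coefficients are controlled by $|\nabla r|$, $\Delta r$, and the convexity-type quantity coming from the Hessian of $r^\beta$. Here the choice of $r(\x) = |\x - \x_0|/b$ with $\x_0\notin\overline\Omega$ and $b$ larger than the diameter-type quantity is exactly what makes $r\in(0,1)$ on $\Omega$ and $|\nabla r|$ bounded away from zero, so that $r^\beta$ is a strongly pseudoconvex weight for $\beta$ large — this is where $\beta_0$ enters. The boundary terms generated by the integrations by parts must be shown to vanish, and this is precisely where the hypothesis $h = \partial_\nu h = 0$ on $\partial\Omega$ (equivalently $w = \partial_\nu w = 0$, since the weight and its normal derivative are smooth and nonvanishing) is used: every boundary integral carries a factor of $w$ or $\partial_\nu w$ on $\partial\Omega$.

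After establishing the estimate for $w$, I would translate it back to $h$: since $e^{2\lambda r^\beta}|\nabla h|^2 \leq 2|\nabla w|^2 + C\lambda^2 |w|^2$ pointwise and $e^{2\lambda r^\beta}|h|^2 = |w|^2$, absorbing the $C\lambda^2|w|^2$ term into the $C\lambda^3|w|^2$ term on the right (valid for $\lambda$ large) yields \eqref{Car est}. The constants $\lambda_0$ and $C$ depend only on $b$, $\Omega$, $d$, $\x_0$ and $\beta$ through the $C^1$ and $C^2$ bounds on $r^\beta$ over $\overline\Omega$ and the lower bound on $|\nabla r|$, which are all determined by these parameters. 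One can also simply cite \cite{LeNguyen:2020} verbatim, since the domain $\Omega$, the weight, and the boundary conditions here are identical to those treated there.

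\textbf{Main obstacle.} The delicate point is the pseudoconvexity bookkeeping: showing that for $\beta \geq \beta_0$ the quadratic form arising from the Hessian of $r^\beta$ is positive enough to dominate the lower-order error terms uniformly, so that the coefficients of $\lambda\int|\nabla w|^2$ and $\lambda^3\int|w|^2$ are genuinely positive. This is a routine but careful computation with $\nabla(r^\beta) = \beta r^{\beta-1}\nabla r$ and $D^2(r^\beta) = \beta(\beta-1)r^{\beta-2}\nabla r\otimes\nabla r + \beta r^{\beta-1}D^2 r$, exploiting that the first term becomes arbitrarily large relative to the second as $\beta\to\infty$ because $|\nabla r|$ is bounded below. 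Everything else — the conjugation, the integration by parts, the vanishing of boundary terms, and the final absorption — is standard, so I would not dwell on it and would instead refer to \cite{LeNguyen:2020} for the detailed verification.
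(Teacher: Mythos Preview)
Your proposal is correct and matches the paper's treatment: the paper gives no proof at all for this lemma and simply refers to Theorem 3.1 and Corollary 3.8 of \cite{LeNguyen:2020}. Your sketch of the conjugation $w = e^{\lambda r^\beta}h$, the integration-by-parts argument with boundary terms killed by the homogeneous Cauchy conditions, and the pseudoconvexity of $r^\beta$ for large $\beta$ is exactly the standard route that reference follows, so there is nothing to add.
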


\subsection{The case when $\|\mathcal F\|_{C^1}$ is  finite}
If $\|\mathcal F\|_{C^1}$ is  finite, we  have the following important theorem, which is the key ingredient for the convergence of the numerical method. The case when $\|\mathcal F\|_{C^1} = \infty$ can be studied by using a cut-off function, see Subsection \ref{Finfty}.
\begin{Theorem}
	Assume that $\|\mathcal F\|_{C^1}$ is finite.
	Fix $\beta = \beta_0$ where $\beta_0$ is the number in Lemma \ref{lemCar}.
	Let $\lambda_0 = \lambda_0(b, \Omega, d, \x_0, \beta)$ be as in Lemma \ref{lemCar}.
	For each $\lambda > \lambda_0$,
	let $\{U_{k}\}_{k \geq 1}$ be the sequence recursively generated by Step \ref{step0} and Step \ref{step1} of Algorithm \ref{alg}.
	Then,
	\begin{multline}
	   \int_{\Omega}  e^{2\lambda r^\beta(\x)}(|U_{k} - U^*|^ 2 +|\nabla (U_{k} - U^*)|^2)\,d\x
	\leq
	\frac{C}{\lambda}  \int_{\Omega} e^{2\lambda r^\beta(\x)}(|U_{k - 1} - U^*|^2 + |\nabla (U_{k - 1} - U^*)|^2)  d\x
	\\
	+   \left(2+ \frac{C}{\lambda}\right) \int_{\Omega} e^{2\lambda r^\beta(\x)} (|{\bf e}|^2 + |\nabla {\bf e}|^2 + |\Delta {\bf e}|^2) d\x
	+ \frac{C}{\lambda} \varepsilon (\|{\bf e}\|^2_{H^s(\Omega)^N}
	+  \|U^*\|^2_{H^s(\Omega)^N})
	\label{3.4444}
\end{multline}
	where $U^*$ is the true solution to \eqref{true}, ${\bf e}$ is the error function satisfying \eqref{errorfunction} and $C$ is a constant depending only on $N,$ $b$, $\Omega$, $d$, $\x_0$, $\beta$ and $\|\mathcal F\|_{C^1}$.
	In particular, we fix $\lambda > \lambda_0$ such that $\theta = \frac{C}{\lambda} \in (0, 1)$ and $\lambda^3 - C > \lambda$. We have
	\begin{multline}
	   \int_{\Omega}  e^{2\lambda r^\beta(\x)} \left(|U_{k} - U^*|^ 2 +|\nabla (U_{k} - U^*)|^2 \right)\,d\x
	\leq
	\theta^k  \int_{\Omega} e^{2\lambda r^\beta(\x)} \left(|U_{0} - U^*|^2 + |\nabla (U_{0} - U^*)|^2 \right)  d\x
	\\
	+   \frac{3(1-\theta^{k+1})}{1 - \theta} \int_{\Omega} e^{2\lambda r^\beta(\x)} \left(|{\bf e}|^2 + |\nabla {\bf e}|^2 + |\Delta {\bf e}|^2 \right) d\x
	+ \frac{\theta \varepsilon(1 - \theta^k)}{1 - \theta} \left(\|{\bf e}\|^2_{H^s(\Omega)^N}
	+  \|U^*\|^2_{H^s(\Omega)^N} \right).
	\label{contraction}
\end{multline}
	\label{thm1}
\end{Theorem}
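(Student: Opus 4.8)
The plan is to derive the one-step inequality \eqref{3.4444} by subtracting the equation for $U^*$ from the Euler--Lagrange/minimality property of $U_k = \Phi_{\lambda,\beta}(U_{k-1})$, and then to iterate it to obtain \eqref{contraction}. First I would set up the comparison function. Let $h_k = U_k - U^* - \mathbf{e}$; by \eqref{errorfunction} and the definition of $H$ (both $U_k$ and $U^*+\mathbf{e}$ have boundary traces $\mathbf{f}$ and normal derivatives $\mathbf{g}$), the difference $h_k$ satisfies the homogeneous Cauchy condition \eqref{3.1} componentwise, so the Carleman estimate of Lemma \ref{lemCar} applies to each component of $h_k$. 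The key algebraic step is to exploit that $U_k$ minimizes $J_{\lambda,\beta}(U_{k-1})(\cdot)$ over $H$: comparing its value at $U_k$ with its value at the competitor $U^* + \mathbf{e} \in H$ gives
\[
	\int_{\Omega} e^{2\lambda r^\beta} |\Delta U_k - S U_k + \mathcal{F}(\x,U_{k-1},\nabla U_{k-1})|^2 \, d\x + \varepsilon \|U_k\|_{H^s}^2
	\le \int_{\Omega} e^{2\lambda r^\beta} |\Delta(U^*+\mathbf{e}) - S(U^*+\mathbf{e}) + \mathcal{F}(\x,U_{k-1},\nabla U_{k-1})|^2 \, d\x + \varepsilon \|U^*+\mathbf{e}\|_{H^s}^2 .
\]

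Next I would rewrite the left-hand integrand using $U_k = h_k + U^* + \mathbf{e}$ and the equation \eqref{true} satisfied by $U^*$, so that $\Delta U^* - SU^* = -\mathcal{F}(\x,U^*,\nabla U^*)$; this turns the left integrand into $|\Delta h_k - S h_k + \Delta\mathbf{e} - S\mathbf{e} + \mathcal{F}(\x,U_{k-1},\nabla U_{k-1}) - \mathcal{F}(\x,U^*,\nabla U^*)|^2$, and on the right the $\Delta(U^*+\mathbf{e})-S(U^*+\mathbf{e})$ terms similarly collapse to $-\mathcal{F}(\x,U^*,\nabla U^*)+\Delta\mathbf{e}-S\mathbf{e}$, so the right integrand is $|\Delta\mathbf{e} - S\mathbf{e} + \mathcal{F}(\x,U_{k-1},\nabla U_{k-1}) - \mathcal{F}(\x,U^*,\nabla U^*)|^2$. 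Moving the $|\Delta h_k|^2$-type term to a favorable position and using the elementary inequality $|a|^2 \ge \tfrac12|b|^2 - |a-b|^2$ with $a = \Delta h_k - Sh_k + (\text{rest})$ and $b = \Delta h_k$, I isolate $\tfrac12\int e^{2\lambda r^\beta}|\Delta h_k|^2$ on the left, controlled by the weighted $L^2$ norms of $S h_k$, $\Delta\mathbf{e}$, $S\mathbf{e}$, the difference $\mathcal{F}(\x,U_{k-1},\nabla U_{k-1}) - \mathcal{F}(\x,U^*,\nabla U^*)$, and the $\varepsilon$-terms $\varepsilon(\|\mathbf{e}\|_{H^s}^2 + \|U^*\|_{H^s}^2)$ (using $\|U_k\|_{H^s}^2 \ge 0$). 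The Lipschitz bound $\|\mathcal{F}\|_{C^1} < \infty$ gives $|\mathcal{F}(\x,U_{k-1},\nabla U_{k-1}) - \mathcal{F}(\x,U^*,\nabla U^*)| \le \|\mathcal{F}\|_{C^1}(|U_{k-1}-U^*| + |\nabla(U_{k-1}-U^*)|)$, which is the crucial place where the $C^1$-finiteness hypothesis and the generality of the nonlinearity (dependence on $\nabla u$) enter, and it introduces the $U_{k-1}-U^*$ term of \eqref{3.4444}. Then I apply Lemma \ref{lemCar} to each component of $h_k$ to bound $\int e^{2\lambda r^\beta}(|h_k|^2 + |\nabla h_k|^2)$ from above by $\tfrac{C}{\lambda}\int e^{2\lambda r^\beta}|\Delta h_k|^2$ (valid once $\lambda^3 - C > \lambda$, so the $\lambda^3$ and $\lambda$ terms dominate the lower-order $Sh_k$ contribution after absorption); combining, and finally undoing the substitution $U_k - U^* = h_k + \mathbf{e}$ via $|U_k - U^*|^2 \le 2|h_k|^2 + 2|\mathbf{e}|^2$ (and likewise for gradients), yields \eqref{3.4444} with the stated dependence of $C$ on $N$ (from the finitely many components), $\|\mathcal{F}\|_{C^1}$, and the Carleman parameters.

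Finally, to get \eqref{contraction}, set $\theta = C/\lambda \in (0,1)$ and denote by $a_k$ the left-hand side of \eqref{3.4444} and by $B$ and $D$ the $\mathbf{e}$-term $3\int e^{2\lambda r^\beta}(|\mathbf{e}|^2+|\nabla\mathbf{e}|^2+|\Delta\mathbf{e}|^2)\,d\x$ (after noting $2+C/\lambda \le 3$ since $\theta<1$) and the $\varepsilon$-term $\theta\varepsilon(\|\mathbf{e}\|_{H^s}^2 + \|U^*\|_{H^s}^2)$ respectively; \eqref{3.4444} reads $a_k \le \theta a_{k-1} + B + D$, and an immediate induction gives $a_k \le \theta^k a_0 + (B+D)\sum_{j=0}^{k-1}\theta^j$, which is exactly \eqref{contraction} after summing the geometric series as $\tfrac{1-\theta^k}{1-\theta}$ and bounding $\sum_{j=0}^{k-1}\theta^j \le \tfrac{1-\theta^{k+1}}{1-\theta}$ for the $B$ term (a harmless slack). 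The main obstacle I anticipate is purely bookkeeping: carefully tracking which terms get absorbed into the Carleman left-hand side versus which become data/regularization error, and verifying that the single smallness requirement $\lambda^3 - C > \lambda$ (together with $C/\lambda < 1$) suffices to close all the absorptions simultaneously — in particular that the $Sh_k$ term, which is only lower-order, does not spoil the $\lambda^3$-gain. The substitution through the error function $\mathbf{e}$ and the use of $\|U_k\|_{H^s}^2 \ge 0$ to discard the unwanted regularization term on the left are the two small tricks that make the estimate clean.
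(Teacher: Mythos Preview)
Your argument is correct and reaches \eqref{3.4444} and \eqref{contraction}, but it proceeds by a slightly different mechanism than the paper. The paper does \emph{not} compare functional values $J_{\lambda,\beta}(U_{k-1})(U_k)\le J_{\lambda,\beta}(U_{k-1})(U^*+\mathbf e)$; instead it uses the first-order optimality (variational) identity: since $U_k$ minimizes $J_{\lambda,\beta}(U_{k-1})$ over $H$, for every $h\in H_0$ one has
\[
\big\langle e^{2\lambda r^\beta}\,[\Delta U_k - SU_k + \mathcal F(\x,U_{k-1},\nabla U_{k-1})],\,\Delta h\big\rangle_{L^2(\Omega)^N}+\varepsilon\langle U_k,h\rangle_{H^s(\Omega)^N}=0,
\]
and then subtracts the corresponding identity for $U^*$ and tests with the specific $h=U_k-U^*-\mathbf e\in H_0$. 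From there the two arguments rejoin: both isolate $\int e^{2\lambda r^\beta}|\Delta h|^2$ on the left (you via $|a|^2\ge\tfrac12|b|^2-|a-b|^2$, the paper via Young/Cauchy--Schwarz on the cross terms), invoke the Lipschitz bound on $\mathcal F$, apply Lemma~\ref{lemCar} to $h$, absorb the $Sh$ term using $\lambda^3-C>\lambda$, and undo $U_k-U^*=h+\mathbf e$ with $(a+b)^2\ge\tfrac12 a^2-b^2$. Your direct-minimality route is a touch more elementary---it avoids differentiating the functional and the bilinear bookkeeping of the variational identity, and cleanly discards the left-hand regularization via $\varepsilon\|U_k\|_{H^s}^2\ge 0$---while the paper's Euler--Lagrange route is the more standard quasi-reversibility treatment and makes the linear structure in $\varphi$ explicit. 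The iteration step to \eqref{contraction} is identical in both.
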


\begin{proof}
	Define 
	\begin{equation*}
	H_0 = \Big\{
		U \in H^s(\Omega)^N: 
		U(\x) = 0 \,
		\mbox{and }
		\partial_{\nu} U(\x) =  0
		\, \mbox{for all } \x \in \partial \Omega
	\Big\}.
\end{equation*}
	Fix $k \geq 1$.
	Since $U_k$ is the minimizer of $J_{\lambda, \beta}(U_{k - 1})$ in $H$,
	by the variational principle, for all $h \in H_0$, we have
	\begin{equation}
		\big\langle e^{2\lambda r^\beta(\x)} [\Delta U_{k} - S U_k + \mathcal F(\x, U_{k - 1}, \nabla U_{k - 1})], \Delta h \big\rangle_{L^2(\Omega)^N} 
		+ \varepsilon \langle U_k, h\rangle_{H^s(\Omega)^N} = 0.
		\label{3.4}
	\end{equation}
	Since $U^*$ is the true solution to \eqref{true}, we have
	\begin{equation}
		\big\langle e^{2\lambda r^\beta(\x)} [\Delta U^* - S U^* + \mathcal F(\x, U^*, \nabla U^*)], \Delta h \big\rangle_{L^2(\Omega)^N} 
		+ \varepsilon \langle U^*, h\rangle_{H^s(\Omega)^N} = \varepsilon \langle U^*, h\rangle_{H^s(\Omega)^N}.
		\label{3.5}
	\end{equation}
Combining \eqref{3.4} and \eqref{3.5}, we obtain
\begin{multline}
		\big\langle e^{2\lambda r^\beta(\x)} [\Delta (U_{k} - U^*) - S (U_k - U^*) + 
		(\mathcal F(\x, U_{k - 1}, \nabla U_{k - 1}) - \mathcal F(\x, U^*, \nabla U^*))], \Delta h \big\rangle_{L^2(\Omega)^N} 
		\\
		+ \varepsilon \langle U_k - U^*, h\rangle_{H^s(\Omega)^N} 
		= -\varepsilon \langle U^*, h\rangle_{H^s(\Omega)^N}.
		\label{3.6}
	\end{multline}
Recall   function ${\bf e}$  in \eqref{errorfunction}. Considering the test function 
	\begin{equation} 
		h = U_k - U^* - {\bf e} \in H_0
		\label{testfn}
	\end{equation} for \eqref{3.6} we derive
\begin{multline}
		\big\langle e^{2\lambda r^\beta(\x)} [\Delta (h + {\bf e}) - S (h + {\bf e}) + 
		(\mathcal F(\x, U_{k - 1}, \nabla U_{k - 1}) - \mathcal F(\x, U^*, \nabla U^*))], \Delta h \big\rangle_{L^2(\Omega)^N} 
		\\
		+ \varepsilon \langle h + {\bf e}, h\rangle_{H^s(\Omega)^N} 
		= -\varepsilon \langle U^*, h\rangle_{H^s(\Omega)^N}.
		\label{3.7}
	\end{multline}	
It follows from \eqref{3.7} that
\begin{multline}
	\int_{\Omega} e^{2\lambda r^\beta(\x)} \big[
	|\Delta h|^2 + 
	\Delta  {\bf e} \Delta h 
	- Sh \Delta h  - S{\bf e} \Delta h 
		+ (\mathcal F(\x, U_{k - 1}, \nabla U_{k - 1}) - \mathcal F(\x, U^*, \nabla U^*)) \Delta h\big]  d\x
		\\
		+ \varepsilon \langle h + {\bf e}, h\rangle_{H^s(\Omega)^N} 
		= -\varepsilon \langle U^*, h\rangle_{H^s(\Omega)^N}.
	\label{3.8}
\end{multline}
Using the inequality $|ab| \leq 8a^2 + \frac{b^2}{32}$ and the Cauchy-Schwarz inequality we deduce from \eqref{3.8} that 
\begin{multline}
	\int_{\Omega} e^{2\lambda r^\beta(\x)}|\Delta h|^2 d\x
	\leq 
	C\int_{\Omega} e^{2\lambda r^\beta(\x)} |Sh|^2 d\x + C\int_{\Omega} e^{2\lambda r^\beta(\x)}(|\mathbf{e}|^2 + |\Delta {\mathbf{e}}|^2) d\x
	\\
	 + C\int_{\Omega} e^{2\lambda r^\beta(\x)} |\mathcal F(\x, U_{k - 1}, \nabla U_{k - 1}) - \mathcal F(\x, U^*, \nabla U^*)|^2 d\x
\\
	+ C \varepsilon (\|{\bf e}\|^2_{H^s(\Omega)^N}
	+  \|U^*\|^2_{H^s(\Omega)^N}).
	\label{3.10}
\end{multline}
Since $\|\mathcal F\|_{C^1} < \infty$, we can find a number $C_{\mathcal F}$ such that
	\begin{equation}
		|\mathcal F(\x, U_{k - 1}, \nabla U_{k - 1}) - \mathcal F(\x, U^*, \nabla U^*)|^2 
		\leq C_{\mathcal F}
		(|U_{k - 1} - U^*|^2 + |\nabla (U_{k - 1} - U^*)|^2) 
	\label{3.11}
	\end{equation}
	for all $\x \in \Omega.$
Fix $\beta \geq \beta_0$ and consider $C$ as a generic constant, depending on $N$, $T$, $\mathcal F$, $b,$ $\Omega,$ $d,$ $\x_0$ and $\beta$.
It implies from \eqref{testfn},  \eqref{3.10} and \eqref{3.11} that
\begin{multline}
	\int_{\Omega} e^{2\lambda r^\beta(\x)}|\Delta h|^2 d\x
	\leq 
	 C \int_{\Omega} e^{2\lambda r^\beta(\x)}|h|^2 d\x 	 + C \int_{\Omega} e^{2\lambda r^\beta(\x)} (|{\bf e}|^2 + |\Delta {\bf e}|^2) d\x
	 \\
	 + C  \int_{\Omega} e^{2\lambda r^\beta(\x)}(|U_{k - 1} - U^*|^2 + |\nabla (U_{k - 1} - U^*)|^2)  d\x
	 \\
	+ C \varepsilon (\|{\bf e}\|^2_{H^s(\Omega)^N}
	+  \|U^*\|^2_{H^s(\Omega)^N}).
	\label{3.12}
\end{multline}
Since $h$ is in $H_0$, the Carleman estimate \eqref{Car est} for $h$ is valid. 
Thanks to \eqref{Car est} and \eqref{3.12} and the fact that $\lambda^3 - C > \lambda$ for $\lambda$ large enough, we have
\begin{multline}
	 \lambda   \int_{\Omega}  e^{2\lambda r^\beta(\x)}(|h|^ 2 +|\nabla h(\x)|^2)\,d\x
	\leq
	C  \int_{\Omega} e^{2\lambda r^\beta(\x)}(|U_{k - 1} - U^*|^2 + |\nabla (U_{k - 1} - U^*)|^2)  d\x
	\\
	+   C \int_{\Omega} e^{2\lambda r^\beta(\x)} (|{\bf e}|^2 + |\Delta {\bf e}|^2) d\x
	+ C \varepsilon (\|{\bf e}\|^2_{H^s(\Omega)^N}
	+  \|U^*\|^2_{H^s(\Omega)^N}).
	\label{3.13}
\end{multline}
Recalling \eqref{testfn} and using the inequality $(a + b)^2 \geq \frac{a^2}{2} - b^2$, we obtain from \eqref{3.13} that
\begin{multline}
	\frac{ \lambda }{2}  \int_{\Omega}  e^{2\lambda r^\beta(\x)}(|U_{k} - U^*|^ 2 +|\nabla (U_{k} - U^*)|^2)\,d\x
	\leq
	C  \int_{\Omega} e^{2\lambda r^\beta(\x)}(|U_{k - 1} - U^*|^2 + |\nabla (U_{k - 1} - U^*)|^2)  d\x
	\\
	+   (C + \lambda) \int_{\Omega} e^{2\lambda r^\beta(\x)} (|{\bf e}|^2 + |\nabla {\bf e}|^2 + |\Delta {\bf e}|^2) d\x
	+ C \varepsilon (\|{\bf e}\|^2_{H^s(\Omega)^N}
	+  \|U^*\|^2_{H^s(\Omega)^N}).
	\label{3.15}
\end{multline}
Dividing both sides of \eqref{3.15} by $\lambda/2$ we obtain  \eqref{3.4444}.

Fix $\lambda > \lambda_0$ such that $\theta = C/\lambda \in (0, 1)$ and $\lambda^3 - C > \lambda$.
Replacing $k$ by $k-1$ in \eqref{3.4444} and multiplying the resulting equation by $\theta$, we have
\begin{multline}
	\theta   \int_{\Omega}  e^{2\lambda r^\beta(\x)}(|U_{k-1} - U^*|^ 2 +|\nabla (U_{k-1} - U^*)(\x)|^2)\,d\x
	\\
	\leq
	\theta^2  \int_{\Omega} e^{2\lambda r^\beta(\x)}(|U_{k - 2} - U^*|^2 + |\nabla (U_{k - 2} - U^*)|^2)  d\x
	+   (2\theta + \theta^2)  \int_{\Omega} e^{2\lambda r^\beta(\x)} (|{\bf e}|^2 + |\nabla {\bf e}|^2 + |\Delta {\bf e}|^2) d\x
	\\
	+ \theta^2 \varepsilon (\|{\bf e}\|^2_{H^s(\Omega)^N}
	+  \|U^*\|^2_{H^s(\Omega)^N}).
	\label{3.16}
\end{multline}
Combining \eqref{3.4444} and \eqref{3.16}, we have
\begin{multline*}
	   \int_{\Omega}  e^{2\lambda r^\beta(\x)}(|U_{k} - U^*|^ 2 +|\nabla (U_{k} - U^*)|^2)\,d\x
	\leq
	\theta^2  \int_{\Omega} e^{2\lambda r^\beta(\x)}(|U_{k - 2} - U^*|^2 + |\nabla (U_{k - 2} - U^*)|^2)  d\x
	\\
	+   (2 + 3\theta + \theta^2) \int_{\Omega} e^{2\lambda r^\beta(\x)} (|{\bf e}|^2 + |\nabla {\bf e}|^2 + |\Delta {\bf e}|^2) d\x
	+ \theta \varepsilon (1 + \theta) (\|{\bf e}\|^2_{H^s(\Omega)^N}
	+  \|U^*\|^2_{H^s(\Omega)^N}).
\end{multline*}
By induction, we obtain \eqref{contraction}.
\end{proof}

\begin{remark}
Define the norm
\[
	\|V\|_{H^1_{\lambda, \beta}(\Omega)^N} 
	= \Big(\int_{\Omega} e^{2\lambda r^\beta(\x)} (|V|^2 + |\nabla V|^2)d\x\Big)^{1/2}
	\quad \mbox{for all } V \in H^1(\Omega)^N.
\]
A direct consequence of Theorem \ref{thm1} and \eqref{contraction} is that if we fix $\beta = \beta_0$ and $\lambda > \lambda_0$ such that $\theta = C/\lambda \in (0, 1)$ and that $\lambda^3 - C > \lambda$, where $C$ is the constant in \eqref{3.4444}, then the sequence $\{U_k\}_{k \geq 1}$ well-approximates the vector valued function $U^*$ with respect to the norm $\|\cdot\|_{H^1_{\lambda, \beta}(\Omega)^N} $. 
The rate of the convergence is $O(\theta^k)$ as $k \to \infty.$
Due to \eqref{errorfunction} and \eqref{contraction}, the error in computation is $O(\delta + \sqrt{\varepsilon}\|U^*\|_{H^s(\Omega)^N})$.
We say that Algorithm is Lipschitz stable with respect to noise.
\label{rem3.1}
\end{remark}

\begin{Corollary}
Fix $\lambda$ and $\beta$ as in Remark \ref{rem3.1} such that $\theta \in (0, 1).$
Define
\begin{equation*}
 	c^*(\x) =  \frac{\sum_{n = 1}^N u^*_n(\x) \Psi_n''(0)  - \Delta p(\x)}{F(\x, p(\x), 0, \nabla p(\x))}
	\quad 
	\mbox{for all } \x \in \Omega.
\end{equation*}
In our approximation context in \eqref{2.6} and Remark \ref{rem2.1}, $c^*$ is the true solution to Problem \ref{p}.
 Then, due to \eqref{contraction} and \eqref{ccomp}, we have
\[
	\|c^k - c^*\|_{H^1_{\lambda, \beta}(\Omega)} \leq 
	C \theta^k \|U_0 - U^*\|_{H^1_{\lambda, \beta}(\Omega)} + C(\delta + \sqrt{\varepsilon}\|U^*\|_{H^s(\Omega)^N})
\]
where $C$ depends only on $N,$ $b$, $\Omega$, $d$, $\x_0$, $\beta$, $\lambda$ and $\|\mathcal F\|_{C^1}$.
\end{Corollary}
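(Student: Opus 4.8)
The plan is to transfer the contraction estimate \eqref{contraction} for the Fourier-coefficient vectors $U_k$ into the corresponding estimate for the reconstructed potentials $c^k$, using the explicit linear formula \eqref{ccomp}. First I would recall that, by definition,
\[
	c^k(\x) - c^*(\x) = \frac{\sum_{n=1}^N \big(u^{\rm comp}_n(\x) - u^*_n(\x)\big)\Psi_n''(0)}{F(\x, p(\x), 0, \nabla p(\x))},
\]
so that $c^k - c^*$ is obtained from $U_k - U^*$ by multiplying each component by the fixed weight $\Psi_n''(0)/F(\x,p,0,\nabla p)$ and summing. Since $F(\x, p(\x), 0, \nabla p(\x)) \neq 0$ on $\overline\Omega$ and $p$ is smooth (so the denominator is bounded below in $C^1(\overline\Omega)$), and since the $\Psi_n''(0)$ are finitely many fixed real numbers, there is a constant $C = C(N, p, F)$ such that pointwise $|c^k - c^*| \leq C\,|U_k - U^*|$ and $|\nabla(c^k - c^*)| \leq C\big(|U_k - U^*| + |\nabla(U_k - U^*)|\big)$, the latter by the product and chain rules applied to the smooth multiplier.

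Second, multiplying these pointwise bounds by the Carleman weight $e^{2\lambda r^\beta(\x)}$ and integrating over $\Omega$ gives, in the notation of Remark \ref{rem3.1},
\[
	\|c^k - c^*\|_{H^1_{\lambda,\beta}(\Omega)}^2 \leq C \int_\Omega e^{2\lambda r^\beta(\x)}\big(|U_k - U^*|^2 + |\nabla(U_k - U^*)|^2\big)\,d\x = C\,\|U_k - U^*\|_{H^1_{\lambda,\beta}(\Omega)^N}^2.
\]
Then I would insert estimate \eqref{contraction}: the right-hand side is bounded by $\theta^k \|U_0 - U^*\|_{H^1_{\lambda,\beta}(\Omega)^N}^2$ plus the two error terms involving $\mathbf{e}$ and $\varepsilon\|U^*\|_{H^s(\Omega)^N}^2$. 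Using \eqref{errorfunction}, namely $\|\mathbf{e}\|_{H^s(\Omega)^N} < 2\delta$ together with the continuous embedding $H^s(\Omega) \hookrightarrow C^2(\overline\Omega)$ to control $|\mathbf{e}|, |\nabla\mathbf{e}|, |\Delta\mathbf{e}|$, and absorbing the bounded factor $\max_{\overline\Omega} e^{2\lambda r^\beta}$ (recall $r < 1$ on $\overline\Omega$) into $C$, the error terms are bounded by $C(\delta^2 + \varepsilon\|U^*\|_{H^s(\Omega)^N}^2)$. Taking square roots and using $\sqrt{a+b+c} \leq \sqrt a + \sqrt b + \sqrt c$ yields
\[
	\|c^k - c^*\|_{H^1_{\lambda,\beta}(\Omega)} \leq C\theta^{k/2}\|U_0 - U^*\|_{H^1_{\lambda,\beta}(\Omega)} + C(\delta + \sqrt\varepsilon\,\|U^*\|_{H^s(\Omega)^N}),
\]
which is the claimed inequality up to relabeling $\theta^{1/2}$ as $\theta$ (or, if one prefers the stated form verbatim, noting $\theta^{k/2} \leq \theta^k$ fails but $\theta^{k} \le \theta^{k/2}$, so one simply keeps $\theta^{k/2}$ and renames it, or absorbs the half-power into the choice of $\lambda$ defining $\theta$).

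I do not expect any serious obstacle here; this corollary is essentially bookkeeping on top of Theorem \ref{thm1}. The only point requiring a little care is the gradient estimate: differentiating $c^k - c^*$ produces a term where the derivative falls on the multiplier $1/F(\x,p,0,\nabla p)$, so one genuinely needs $F(\x,p,0,\nabla p)$ to be $C^1$ and bounded away from zero on $\overline\Omega$ — both of which are guaranteed by the smoothness of $p$ and $F$ and the standing hypothesis $F(\x,p,0,\nabla p) \neq 0$ on $\overline\Omega$ from Problem \ref{p}. A secondary cosmetic issue is matching the exact exponent of $\theta$ in the statement; I would either state the result with $\theta^{k/2}$ or simply redefine the contraction parameter, since the constant $C$ and the qualitative $O(\theta^k)$ rate are what matter.
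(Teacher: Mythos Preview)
Your argument is correct and matches the paper's own treatment, which simply states the corollary as an immediate consequence of \eqref{contraction} and \eqref{ccomp} without spelling out a proof; you have filled in precisely the linear-multiplier bookkeeping that this citation presupposes. Your observation about $\theta^{k/2}$ versus $\theta^k$ is also well taken---the stated exponent in the corollary should indeed be read as $\theta^{k/2}$ (or with $\theta$ redefined), since \eqref{contraction} is an estimate on the squared norm.
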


\subsection{The case when $\|\mathcal F\|_{C^1} = \infty$} \label{Finfty}

In the case $\|\mathcal F\|_{C^1}$ is infinite, we need  additional information about the true solution $U^*$.
Assume that we know a large number $M$ such that $|U^*(\x)| + |\nabla U^*(\x)| < M$ for all $\x \in \overline \Omega$.
Define a smooth cut off function $\chi: \overline \Omega \times \R^N \times \R^{N \times d} \to \R$ that satisfies
\[
	\chi(\x, \xi, \p) = \left\{
	\begin{array}{ll}
		 1 & |\xi| + |\p| < M,\\
		c \in (0, 1) & M < |\xi| + |\p| < 2M,\\
		0 &\mbox{otherwise},
	\end{array}
	\right.
	\quad \mbox{for all } \x \in \overline \Omega, \xi \in \R^N, \p \in \R^{N \times d}.
\]
Define $\widetilde{\mathcal F} = \chi \mathcal F.$
It is clear that $U^*$ is the solution to 
\begin{equation}
\left\{
	\begin{array}{ll}
		\Delta U^*(\x) - SU^*(\x) + \widetilde{\mathcal F}(\x, U^*(\x), \nabla U^*(\x)) = 0
	&
	 \x \in \Omega,\\
	U^* = {\bf f}^* & \x \in \partial \Omega,\\
	\partial_{\nu} U^* = {\bf g}^* & \x \in \partial \Omega.
	\end{array}
\right.
\label{true1}
\end{equation}
Since $\mathcal F$ is smooth, it is locally bounded in $C^1$. Hence, $\|\widetilde{\mathcal F}\|_{C^1}$ is a finite number. 
We now can apply Algorithm \ref{alg} to solve \eqref{true1} to compute $U^*.$

\section{Numerical study} \label{secNum}

In this section, we present important details of the implementation of   Algorithm \ref{alg} and display some numerical results obtained by the algorithm when the dimension $d = 2$.
In our numerical study, the computational domain $\Omega = (-1, 1)^2$ which is uniformly partitioned into a $64\times 64$ meshgrid. The final time $T = 2$. The initial condition $p(\x) = 0.5$ for all $\x$. The number of terms in the truncated Fourier series with respect to the basis $\{\Psi_n\}_{n\geq 1}$ is $20$ for all the involved functions.

\subsection{Data generation}

To generate data for the inverse problem, we first approximate the governing initial value problem \eqref{waveeqn} by 
\begin{equation}
	\left\{
		\begin{array}{rcll}
			u_{tt}(\x, t) &=& \Delta u(\x, t) + c(\x) F(\x, u, u_t, \nabla u) &(\x, t) \in G \times (0, T),\\
			u(\x, 0) &=& p(\x) &\x \in G,\\
			u_t(\x, 0) &=& 0 &\x \in G.
		\end{array}
	\right.
	\label{approWave}
\end{equation}
where $G$ is the open square $(-R_1, R_1)^2$ with $R_1 = 4$. This choice of $R_1$ ensures that at the final time $t = T$, $u(\x,T) = p(\x)$ for all $\x\in \mathbb{R}^2\setminus G$ and therefore the restriction onto $\overline{\Omega}$ of the solution $u(\x,t)$ to \eqref{approWave} coincides with that of the solution to the original problem \eqref{waveeqn} for $t\leq T$. Problem~\eqref{approWave} is then solved using an explicit finite difference scheme as follows,
$$
\left\{
\begin{array}{l}
u_0 =  u_1 = p, \\
u_{j+2}= \Delta t^2\left[\Delta u_{j+1} + cF\left(\x,u_{j+1},\frac{u_{j+1}-u_j}{\Delta t},\nabla u_{j+1}\right)\right] + 2u_{j+1} - u_j,\quad \x\in G
\end{array}
\right.
$$
where $u_j(\x) = u(\x,t_j)$, the interval $[0,T]$ is uniformly discretized  as $0 = t_0 < t_1 < \dots < t_{256} = T$ with time step size $\Delta t = t_1-t_0$. After getting the solution, we extract $u(\x,t_j)$ and  compute its normal derivative using finite differences for all $j$ and $\x\in \partial \Omega$ to form the data $\bf f^*$ and $\bf g^*$ for the inverse problem. Note that the vectors $\bf f^*$ and $\bf g^*$ are the discretized versions of their counterparts in the theory. We then add $5\%$ of artificial noise to $\bf f^*$ and $\bf g^*$ to obtain the noisy data $\bf f$ and $\bf g$, i.e.
$$
\textbf{f} = \textbf{f}^* + 0.05\|\textbf{f}^*\|_2\text{rand}(-1,1)
$$
where rand$(-1,1)$ is a unit vector consisting of normally distributed random numbers within the interval $(-1,1)$ and $\|\textbf{f}^*\|_2$ is the vector $2$-norm of $\bf f^*$.

\subsection{Numerical examples for $F = \sqrt{|u|} + |\nabla u|$}

We consider $F = \sqrt{|u|} + |\nabla u|$. The test objects are: two disks of the same size, one has potential $2$ and the other has potential $1$; a kite-shaped object with potential $2$; a peanut-shaped object with potential $2$. 

\begin{figure}[h!!!]
\centering
\subfloat[Target]{\includegraphics[width=6cm]{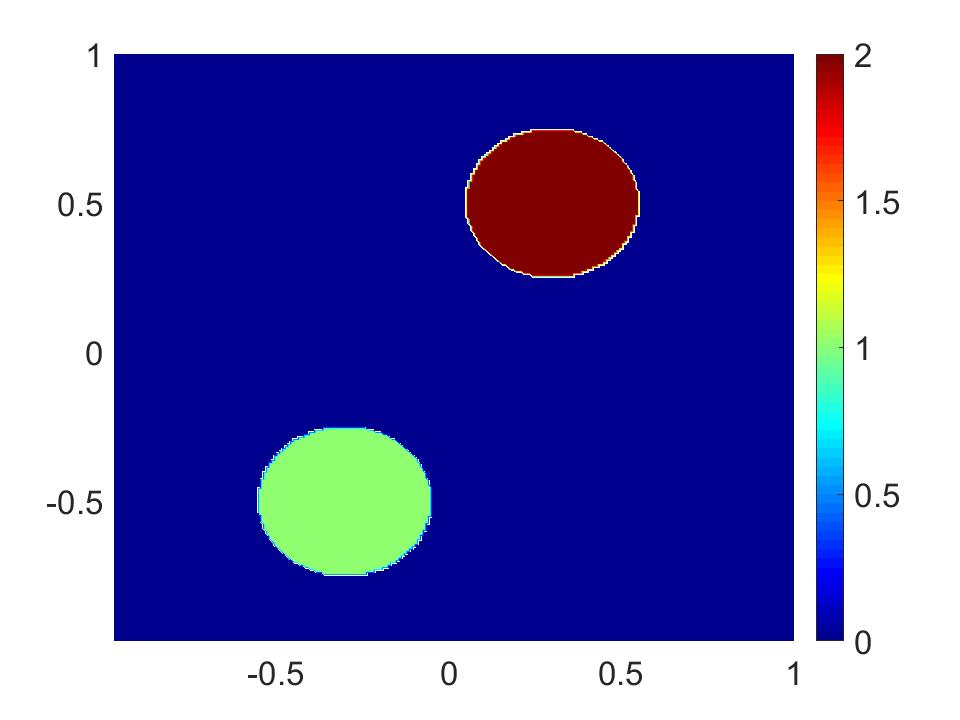}} \hspace{0cm}
\subfloat[Reconstruction at  first iteration]{\includegraphics[width=6cm]{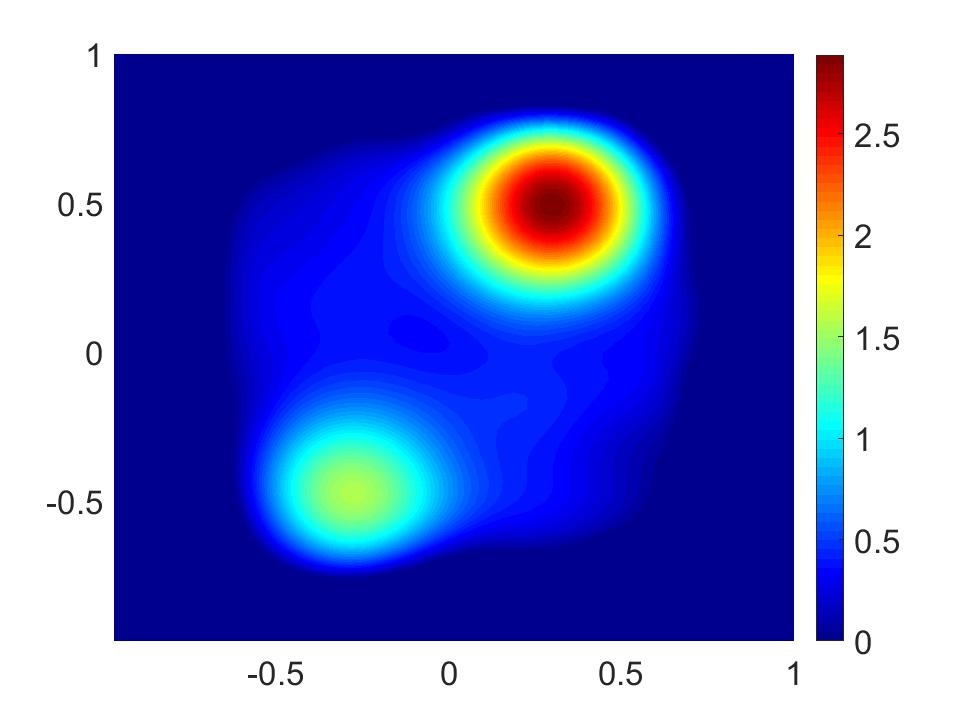}}  \hspace{0cm} 
\subfloat[Reconstruction at 25th iteration]{\includegraphics[width=6cm]{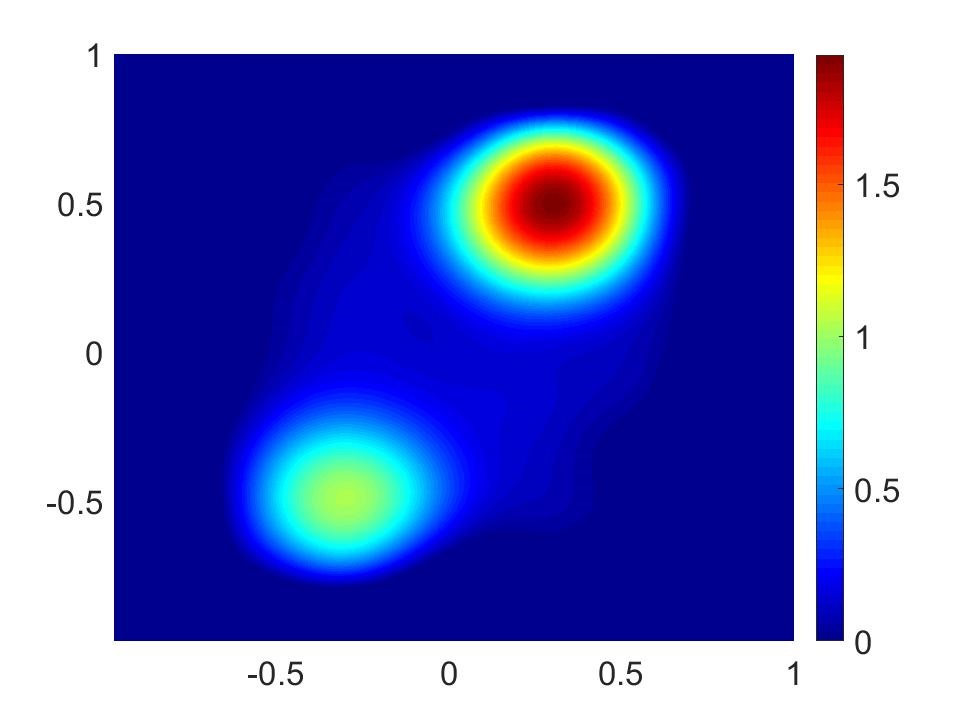}} \hspace{0cm}
\subfloat[Cost function]{\includegraphics[width=6cm]{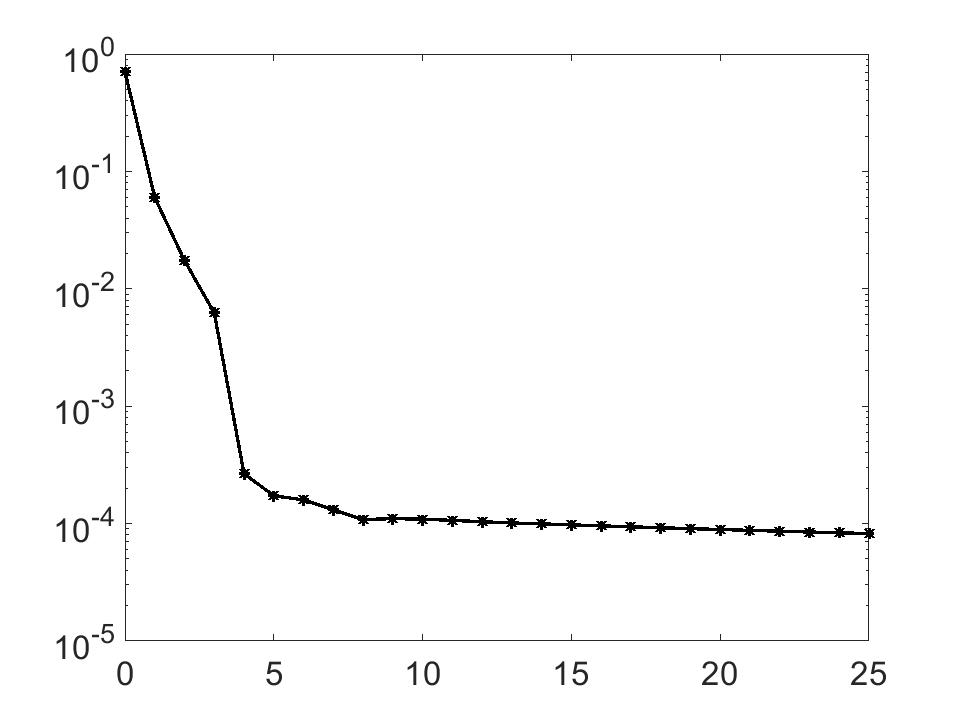}}  \hspace{0cm} 
\caption{Reconstruction of a potential supported in two disks.} 
 \label{fi1}
\end{figure}

\begin{figure}[h!!!]
\centering
\subfloat[Target]{\includegraphics[width=6cm]{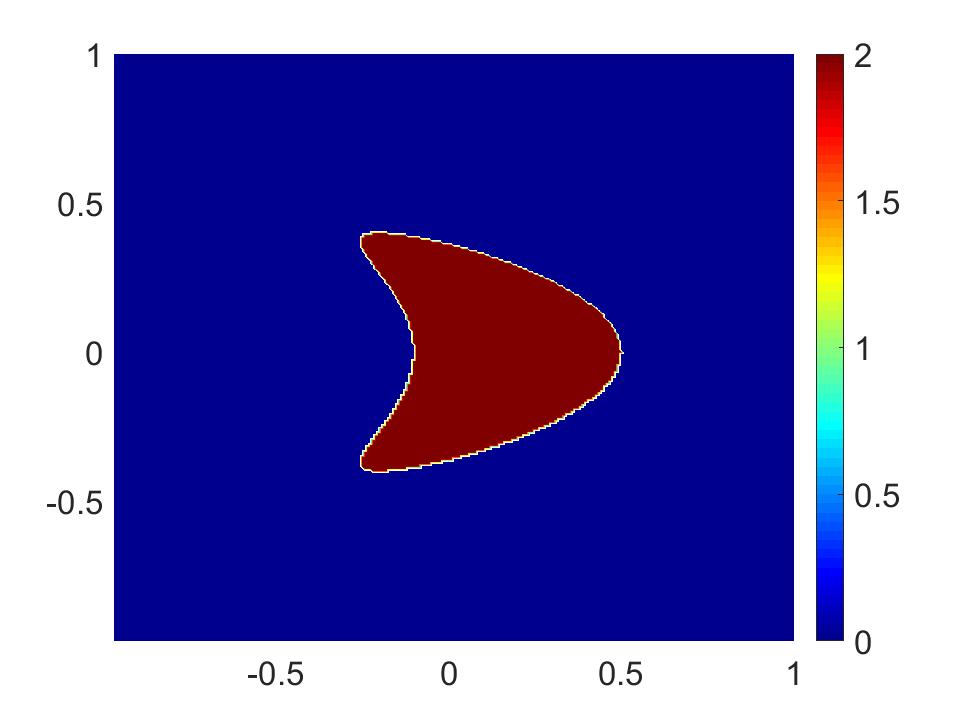}} \hspace{0cm}
\subfloat[Reconstruction at first iteration]{\includegraphics[width=6cm]{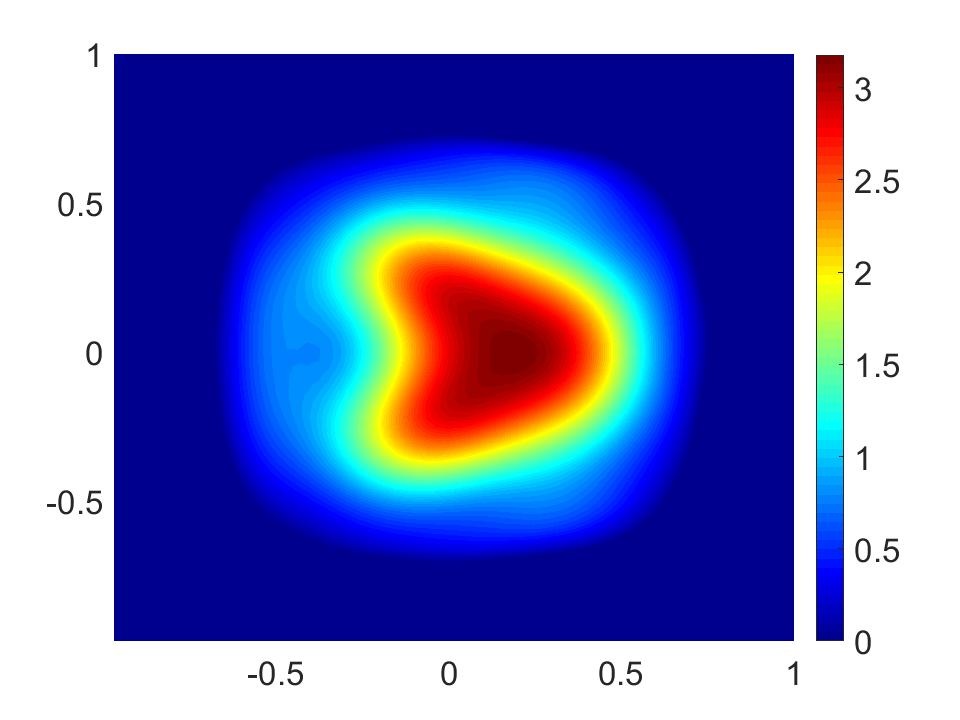}}  \hspace{0cm} 
\subfloat[Reconstruction at 25th iteration]{\includegraphics[width=6cm]{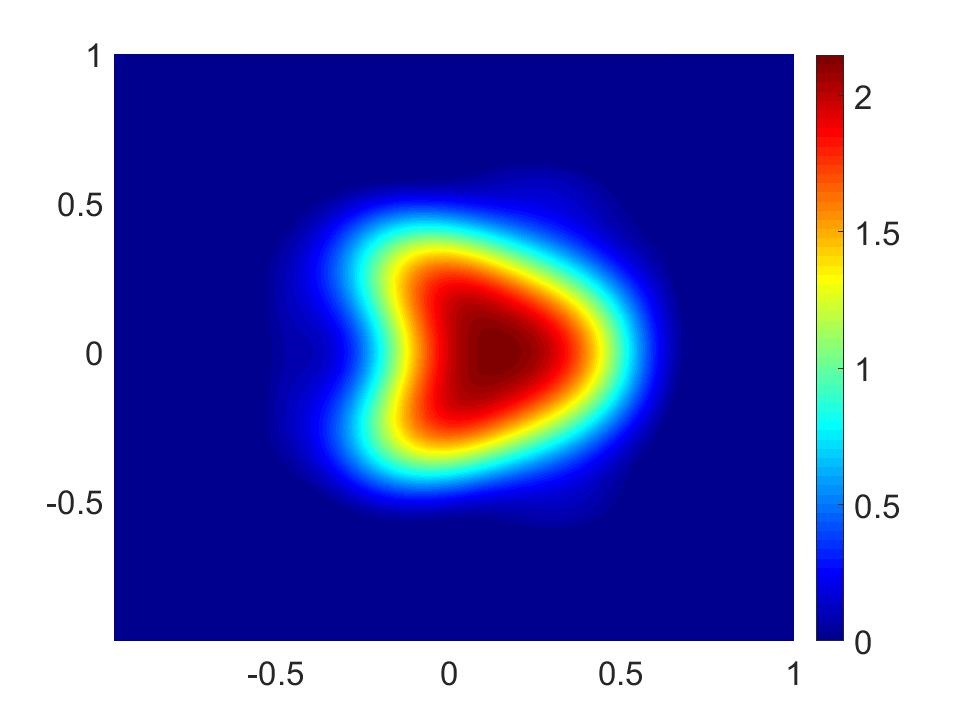}} \hspace{0cm}
\subfloat[Cost function]{\includegraphics[width=6cm]{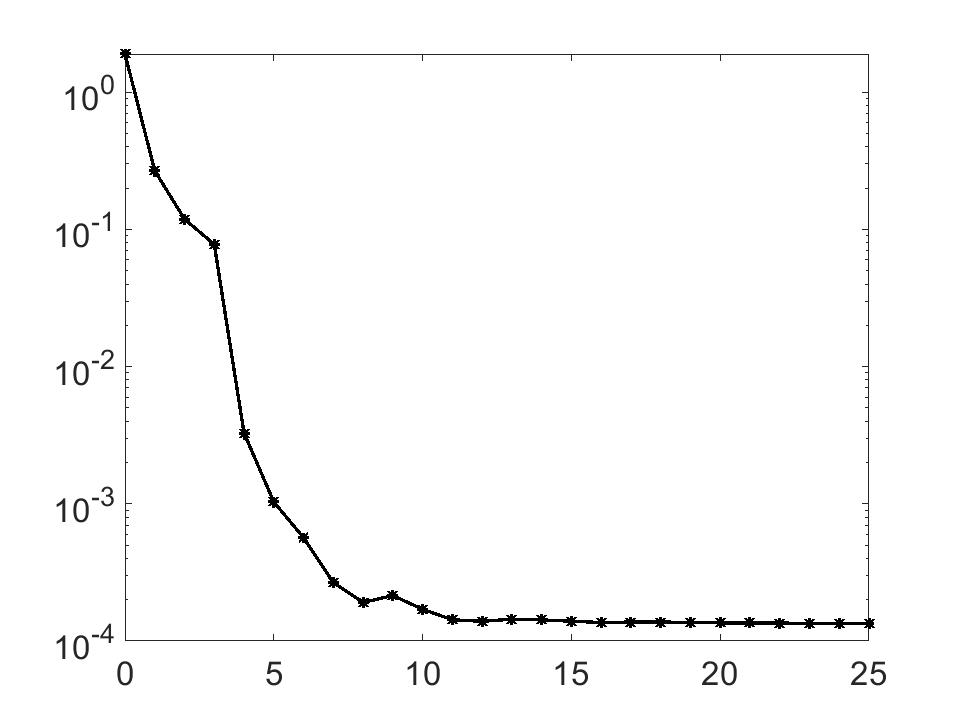}}  \hspace{0cm} 
\caption{Reconstruction of a  potential with kite-shaped support.
 } 
 \label{fi2}
\end{figure}

\begin{figure}[h!!!]
\centering
\subfloat[Target]{\includegraphics[width=6cm]{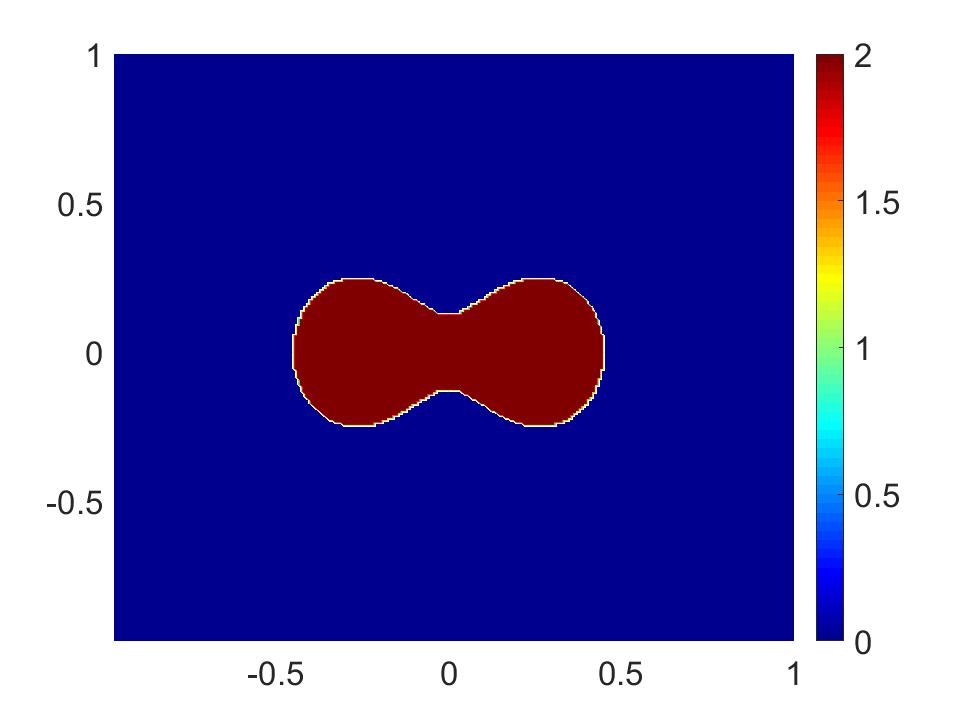}} \hspace{0cm}
\subfloat[Reconstruction at first iteration]{\includegraphics[width=6cm]{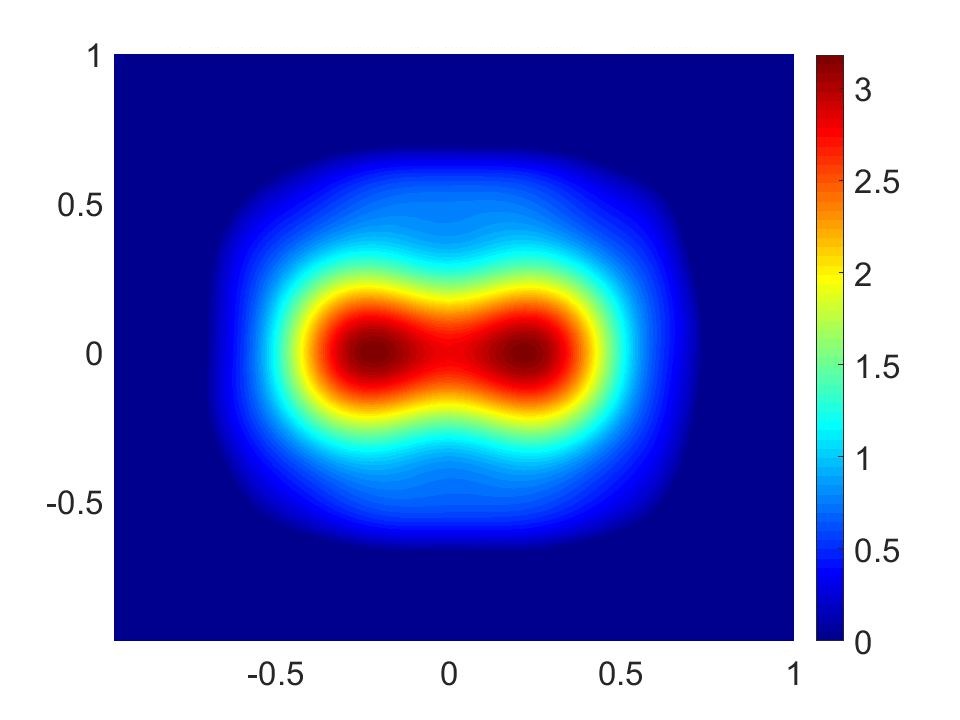}}  \hspace{0cm} 
\subfloat[Reconstruction at 25th iteration]{\includegraphics[width=6cm]{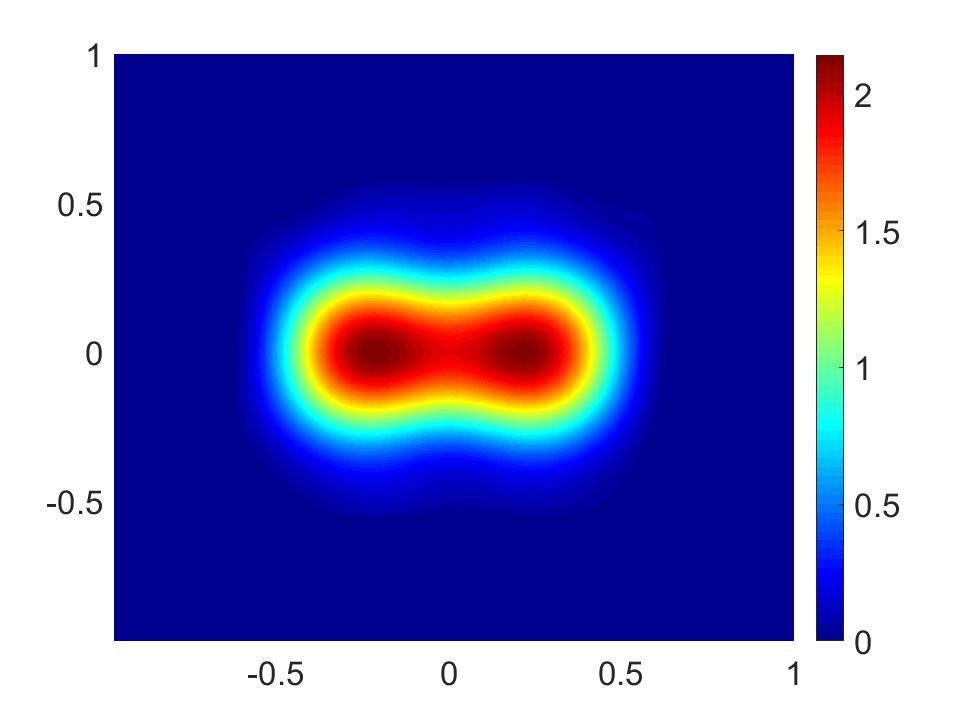}} \hspace{0cm}
\subfloat[Cost function]{\includegraphics[width=6cm]{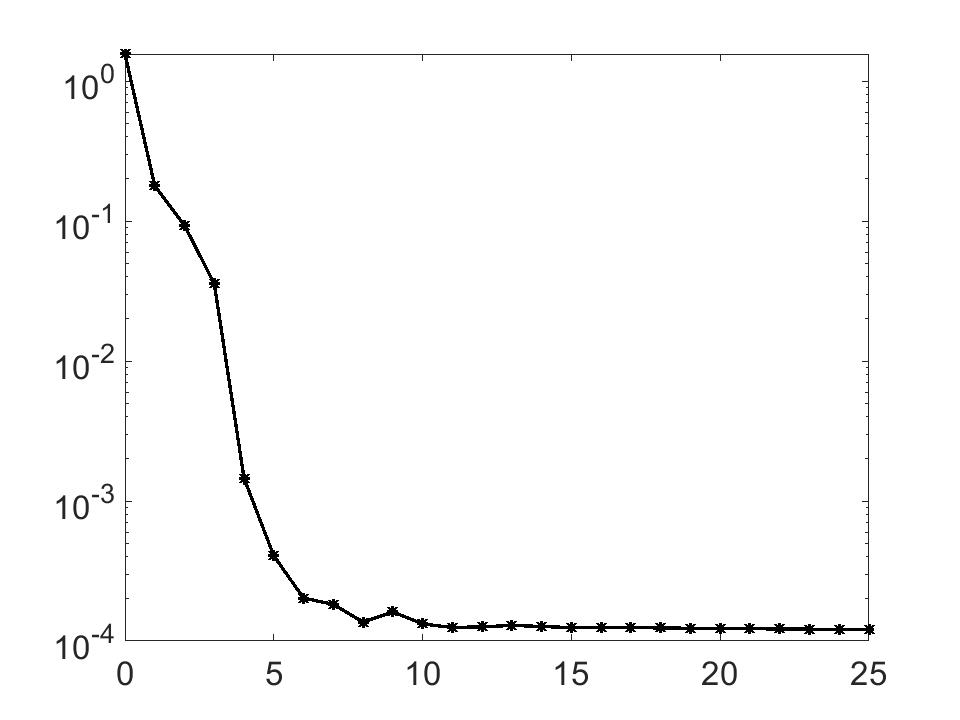}}  \hspace{0cm} 
\caption{Reconstruction of a potential with peanut-shaped support.
 } 
 \label{fi3}
\end{figure}

We choose the regularization parameter $\varepsilon = 7\times 10^{-5}$. Note that in the simulation, we used the $H^2(\Omega)^N$-norm for the regularization term. However, in order to simplify calculations, we only included the $L^2$-norms of the second-order partial derivatives of $U_k$. According to our observation, this simplification did not affect the reconstructions, but helped reduce computation time. 

For the Carleman weight function $e^{2\lambda r^\beta(\x)}$, we choose 
$$\lambda = 2, \quad \beta = 10, \quad r(\x) = \frac{|\x - (0,1.25)|}{3}.$$ 

It is natural to compute the initial solution $U_0$ by solving the problem obtained by removing from \eqref{2.8}--\eqref{2.9} the nonlinear term $\mathcal F(\x, U(\x), \nabla U(\x))$. 
In other words, we compute
the initial term $U_0$  solving for the Tikhonov regularized solution of the linear equation
$$
\left\{
\begin{array}{ll}
\Delta U(\x) - SU(\x) = 0 & \x \in \Omega, \\
U = \bf f & \x\in \partial \Omega,\\
\partial_\nu U = \bf g & \x\in \partial \Omega.
\end{array}
\right.
$$
We experience that this choice of $U_0$ helps reduce the computational time.
%

At the $k$th iteration, we compute the cost function
$$
J(U_k) = \int_{\Omega}e^{2\lambda r^\beta(\x)} |\Delta U_k - S U_k + \mathcal F(\x, U_k, \nabla U_k)|^2 d\x + \varepsilon \sum_{|\alpha|=2}\left\|D^\alpha U_k\right\|_{L^2(\Omega)^N}^2
$$
to observe convergence of the method. The discretization of the cost function is done using finite difference approximations. We do not describe the details of the discretization here since a similar discretization with more details can be found in~\cite{LeNguyen:2020}. 
We see that after about $10$ iterations, the value of the cost function starts to stabilize, and therefore we stopped the algorithm after $25$ iterations. The pictures of the true target, the reconstruction at the first and last iteration, along with the values of the cost functions throughout $25$ iterations are shown in Figure \ref{fi1} - \ref{fi3}. From the pictures of the cost functions we can see that the proposed method converges quickly.   These results also indicate that the proposed method is able to provide reasonable and robust reconstructions of both the value and the support of several  potentials.

\subsection{Numerical examples for $F = |u|^2 + |\nabla u|^2$}

The numerical method is also tested  with another function $F$ which has a higher level of nonlinearity ($F = |u|^2 + |\nabla u|^2$). We consider the same test cases and parameters as in the case when $F = \sqrt{|u|} + |\nabla u|$. We obtain similar results for both the behavior of the cost function and the quality of the reconstructions. To avoid a repeat of  several similar images we show only the reconstruction results at the 25th iterations in Figure \ref{fi4}. 

\begin{figure}[h!!!]
\centering
\subfloat[Two disks]{\includegraphics[width=5.7cm]{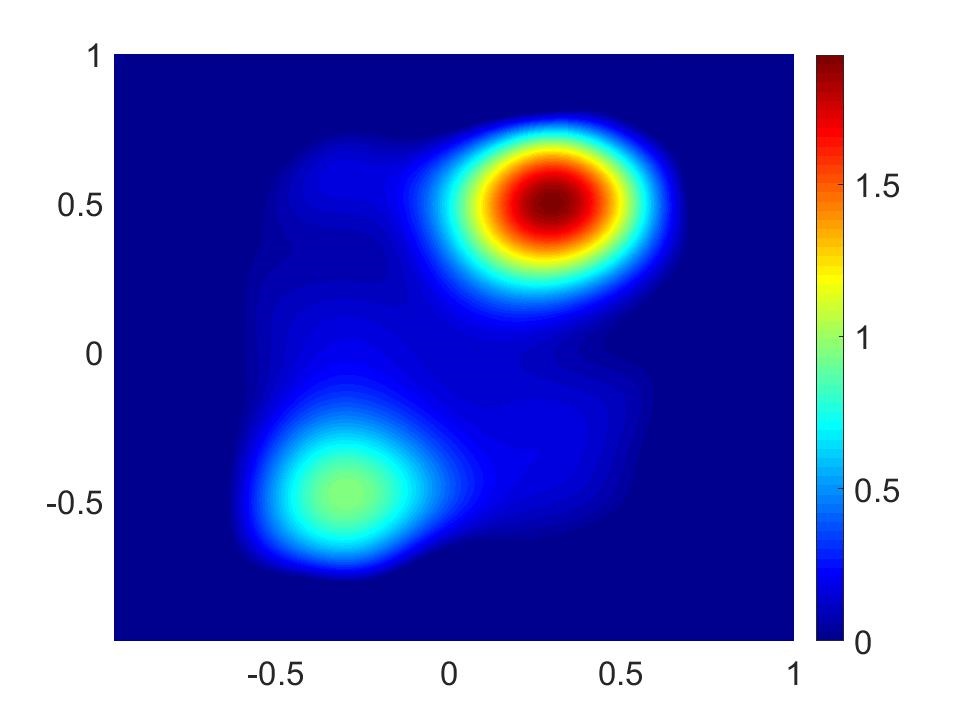}}  \hspace{-0.4cm} 
\subfloat[Kite-shaped object]{\includegraphics[width=5.7cm]{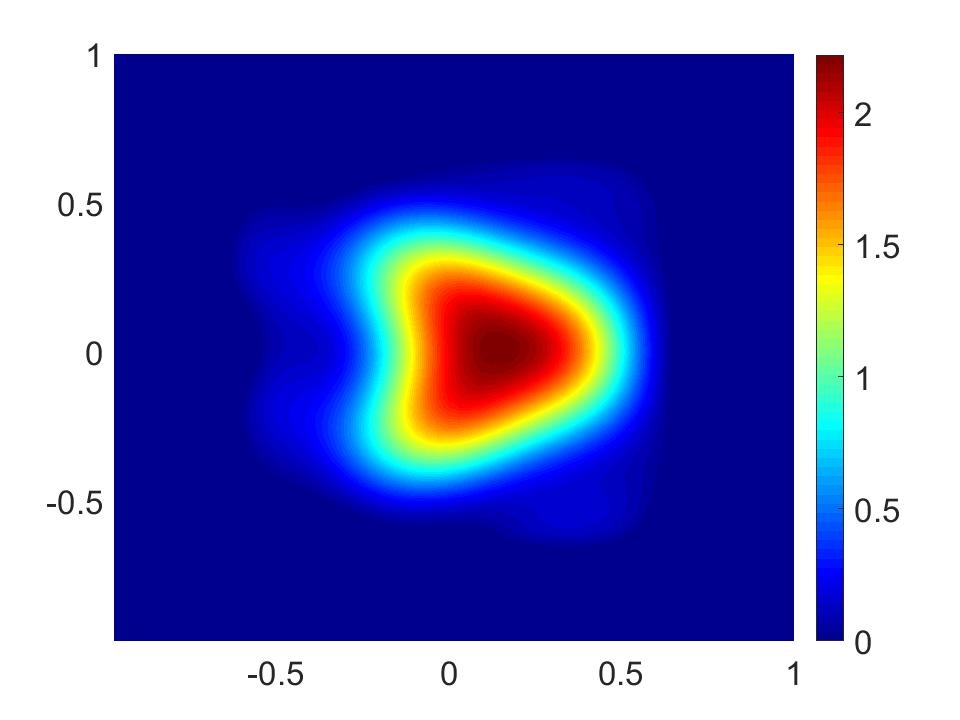}} \hspace{-0.4cm}
\subfloat[Peanut-shaped object]{\includegraphics[width=5.7cm]{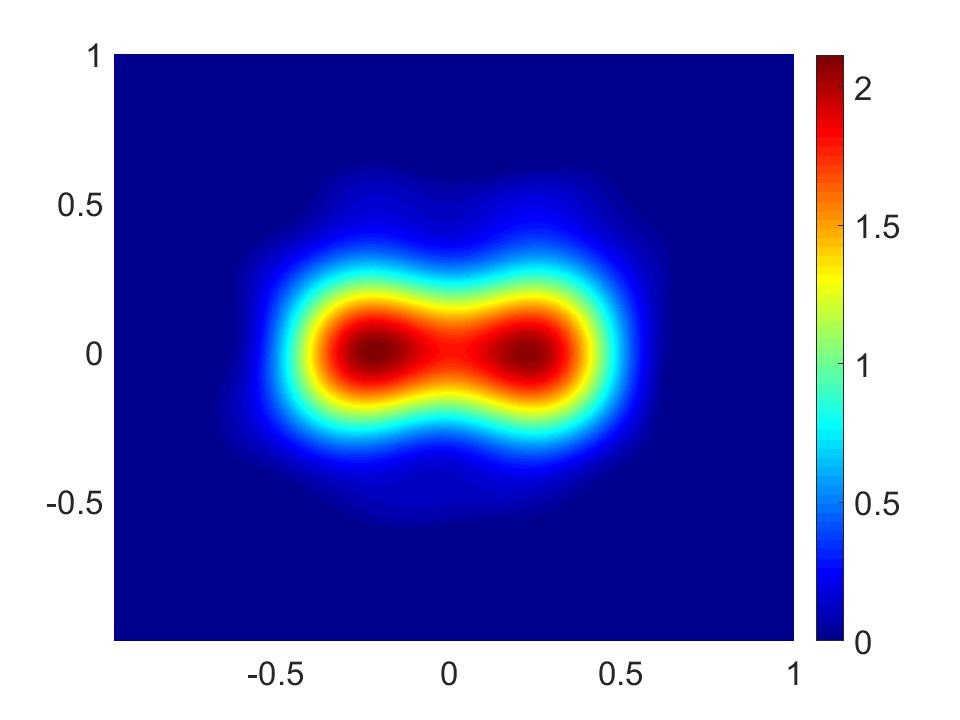}}  \hspace{-0.4cm} 
\caption{Reconstructions with $F = |u|^2 + |\nabla u|^2$.
 } 
 \label{fi4}
\end{figure}

\section{Concluding remarks}\label{sec_concluding}

We have studied a nonlinear inverse problem of reconstructing the potential coefficient of nonlinear hyperbolic equation. 
Although this problem is nonlinear, we solve it globally. That means we do not require a good initial guess. 
Our method consists of two main steps.
In the first step, we reduce this coefficient inverse problem to the problem of solving a system of quasi-linear PDEs with Cauchy boundary data. Solving this system is the main goal of the second step. 
We  define an operator $\Phi$ such that the true solution to the inverse problem is the fixed point of $\Phi$.
This fixed point is computed by constructing a recursive sequence as in the proof of the classical contraction principle.
We next exploit a Carleman estimate to prove the convergence of this sequence.
We obtain  an exponential rate of convergence.
Moreover, we have proved that the stability of our method with respect to noise is of  Lipschitz type.
Numerical results are out of expectation.
 
\section*{Acknowledgement}   
The work of DLN and TT was partially supported by NSF grant DMS-1812693.
The work of LHN was supported by US Army Research Laboratory and US Army Research
Office grant W911NF-19-1-0044 and  was supported, in part, by funds provided by the Faculty
Research Grant program at UNC Charlotte.

\bibliographystyle{plain}
\bibliography{mybib_LiemTrung}

\begin{thebibliography}{10}

\bibitem{KlibanovNik:ra2017}
A.~B. Bakushinskii, M.~V. Klibanov, and N.~A. Koshev.
\newblock Carleman weight functions for a globally convergent numerical method
  for ill-posed {C}auchy problems for some quasilinear {PDEs}.
\newblock {\em Nonlinear Anal. Real World Appl.}, 34:201--224, 2017.

\bibitem{Baudouin2013}
L.~Baudouin, M.~de~Buhan, and S.~Ervedoza.
\newblock Global carleman estimates for waves and applications.
\newblock {\em Communications in Partial Differential Equations},
  38:1532--4133, 2013.

\bibitem{BAUDOUIN:SIAMNumAna:2017}
L.~Baudouin, M.~de~Buhan, and S.~Ervedoza.
\newblock Convergent algorithm based on {C}arleman estimates for the recovery
  of a potential in the wave equation.
\newblock {\em SIAM J. Nummer. Anal.}, 55:1578--1613, 2017.

\bibitem{Baudouin:SIAM2021}
L.~Baudouin, M.~de~Buhan, S.~Ervedoza, and A.~Osses.
\newblock Carleman-based reconstruction algorithm for the waves.
\newblock {\em SIAM Journal on Numerical Analysis}, 59(2):998--1039, 2021.

\bibitem{Beilina2015}
L.~Beilina and M.~V. Klibanov.
\newblock Globally strongly convex cost functional for a coefficient inverse
  problem.
\newblock {\em Nonlinear Anal. Real World Appl.}, 22:272--288, 2015.

\bibitem{Brezis:Springer2011}
H.~Br\'ezis.
\newblock {\em Functional Analysis, Sobolev Spaces and Partial Differential
  Equations}.
\newblock Springer, New York, 2011.

\bibitem{BukhgeimKlibanov:smd1981}
A.~L. Bukhgeim and M.~V. Klibanov.
\newblock Uniqueness in the large of a class of multidimensional inverse
  problems.
\newblock {\em Soviet Math. Doklady}, 17:244--247, 1981.

\bibitem{Claud2020}
M.~Claudio and G.~Uhlmann.
\newblock The {C}alderon problem for quasilinear elliptic equations.
\newblock {\em Ann. Inst. H. Poincar\'e Anal. Non Lin\'eaire}, 37:1143--1166,
  2020.

\bibitem{Evans:PDEs2010}
L.~C. Evans.
\newblock {\em Partial Differential Equations}.
\newblock Graduate Studies in Mathematics, Volume 19. Amer. Math. Soc., 2010.

\bibitem{Hankeetal:nm1995}
M.~Hanke, A.~Neubauer, and O.~Scherzer.
\newblock A convergence analysis of the {L}andweber iteration for nonlinear
  ill-posed problems.
\newblock {\em Numer. Math.}, 72:21--37, 1995.

\bibitem{Harju2015}
M.~Harju.
\newblock Numerical computation of the inverse born approximation for the
  nonlinear {S}chr{\"o}dinger in two dimensions.
\newblock {\em Comput. Meth. Appl. Math.}, 16:133--143, 2015.

\bibitem{Isako2011}
V.~Isakov.
\newblock Uniqueness of recovery of some quasilinear partial differential
  equations.
\newblock {\em Comm. Partial Differential Equations}, 26:1947--1973, 2011.

\bibitem{VoKlibanovNguyen:IP2020}
V.~A. Khoa, G.~W. Bidney, M.~V. Klibanov, L.~H. Nguyen, L.~Nguyen, A.~Sullivan,
  and V.~N. Astratov.
\newblock Convexification and experimental data for a {3D} inverse scattering
  problem with the moving point source.
\newblock {\em Inverse Problems}, 36:085007, 2020.

\bibitem{Khoaelal:IPSE2021}
V.~A. Khoa, G.~W. Bidney, M.~V. Klibanov, L.~H. Nguyen, L.~Nguyen, A.~Sullivan,
  and V.~N. Astratov.
\newblock An inverse problem of a simultaneous reconstruction of the dielectric
  constant and conductivity from experimental backscattering data.
\newblock {\em Inverse Problems in Science and Engineering}, 29(5):712--735,
  2021.

\bibitem{KhoaKlibanovLoc:SIAMImaging2020}
V.~A. Khoa, M.~V. Klibanov, and L.~H. Nguyen.
\newblock Convexification for a 3{D} inverse scattering problem with the moving
  point source.
\newblock {\em SIAM J. Imaging Sci.}, 13(2):871--904, 2020.

\bibitem{Klibanov:sjma1997}
M.~V. Klibanov.
\newblock Global convexity in a three-dimensional inverse acoustic problem.
\newblock {\em SIAM J. Math. Anal.}, 28:1371--1388, 1997.

\bibitem{Klibanov:nw1997}
M.~V. Klibanov.
\newblock Global convexity in diffusion tomography.
\newblock {\em Nonlinear World}, 4:247--265, 1997.

\bibitem{Klibanov:ip2015}
M.~V. Klibanov.
\newblock Carleman weight functions for solving ill-posed {C}auchy problems for
  quasilinear {PDE}s.
\newblock {\em Inverse Problems}, 31:125007, 2015.

\bibitem{Klibanov:jiip2017}
M.~V. Klibanov.
\newblock Convexification of restricted {D}irichlet to {N}eumann map.
\newblock {\em J. Inverse and Ill-Posed Problems}, 25(5):669--685, 2017.

\bibitem{KlibanovIoussoupova:SMA1995}
M.~V. Klibanov and O.~V. Ioussoupova.
\newblock Uniform strict convexity of a cost functional for three-dimensional
  inverse scattering problem.
\newblock {\em SIAM J. Math. Anal.}, 26:147--179, 1995.

\bibitem{KlibanovKolesov:cma2019}
M.~V. Klibanov and A.~E. Kolesov.
\newblock Convexification of a 3-{D} coefficient inverse scattering problem.
\newblock {\em Computers and Mathematics with Applications}, 77:1681--1702,
  2019.

\bibitem{KlibanovKolNguyen:SISC2019}
M.~V. Klibanov, A.~E. Kolesov, and D-L. Nguyen.
\newblock Convexification method for an inverse scattering problem and its
  performance for experimental backscatter data for buried targets.
\newblock {\em SIAM J. Imaging Sci.}, 12:576--603, 2019.

\bibitem{KlibanovLeNguyen:SIAM2020}
M.~V. Klibanov, T.~T. Le, and L.~H. Nguyen.
\newblock Convergent numerical method for a linearized travel time tomography
  problem with incomplete data.
\newblock {\em SIAM Journal on Scientific Computing}, 42:B1173--B1192, 2020.

\bibitem{KlibanovLeNguyenIPI2021}
M.~V. Klibanov, T.~T. Le, L.~H. Nguyen, A.~Sullivan, and L.~Nguyen.
\newblock Convexification-based globally convergent numerical method for a 1{D}
  coefficient inverse problem with experimental data.
\newblock {\em to appear on Inverse Problems and Imaging, DOI:
  https://www.aimsciences.org/article/doi/10.3934/ipi.2021068}, 2021.

\bibitem{KlibanovLiZhang:ip2019}
M.~V. Klibanov, J.~Li, and W.~Zhang.
\newblock Convexification of electrical impedance tomography with restricted
  {D}irichlet-to-{N}eumann map data.
\newblock {\em Inverse Problems}, 35:035005, 2019.

\bibitem{KlibanovLiZhang:SIAM2019}
M.~V. Klibanov, Z.~Li, and W.~Zhang.
\newblock Convexification for the inversion of a time dependent wave front in a
  heterogeneous medium.
\newblock {\em SIAM J. Appl. Math.}, 79:1722--1747, 2019.

\bibitem{KlibanovMalinsky:ip1991}
M.~V. Klibanov and J.~Malinsky.
\newblock Newton-{K}antorovich method for 3-dimensional potential inverse
  scattering problem and stability for the hyperbolic {C}auchy problem with
  time dependent data.
\newblock {\em Inverse Problems}, 7:577--596, 1991.

\bibitem{Kuryl2018}
Y.~Kurylev, M.~Lassas, and G.~Uhlmann.
\newblock Inverse problems for {L}orentzian manifolds and non-linear hyperbolic
  equations.
\newblock {\em Invent. Math.}, 212:781--857, 2018.

\bibitem{Ladyzhenskaya:sv1985}
O.~A. Ladyzhenskaya.
\newblock {\em The Boundary Value Problems of Mathematical Physics}.
\newblock Springer-Verlag, New York, 1985.

\bibitem{LeNguyen:2020}
T.~T. Le and L.~H. Nguyen.
\newblock A convergent numerical method to recover the initial condition of
  nonlinear parabolic equations from lateral {C}auchy data.
\newblock {\em Journal of Inverse and Ill-posed Problems, DOI:
  https://doi.org/10.1515/jiip-2020-0028}, 2020.

\bibitem{LeNguyen:preprint2021}
T.~T. Le and L.~H. Nguyen.
\newblock The gradient descent method for the convexification to solve boundary
  value problems of quasi-linear {PDEs} and a coefficient inverse problem.
\newblock {\em preprint Arxiv:2103.04159}, 2021.

\bibitem{LeNguyenNguyenPowell:JOSC2021}
T.~T. Le, L.~H. Nguyen, T-P. Nguyen, and W.~Powell.
\newblock The quasi-reversibility method to numerically solve an inverse source
  problem for hyperbolic equations.
\newblock {\em Journal of Scientific Computing}, 87:90, 2021.

\bibitem{Lions:sp1972}
J.~L. Lions and E.~Magenes.
\newblock {\em Non-homogeneous Boundary Value Problems and Applications}.
\newblock Springer, Berlin, Heidelberg, 1972.

\bibitem{Nguyen:CAMWA2020}
L.~H. Nguyen.
\newblock A new algorithm to determine the creation or depletion term of
  parabolic equations from boundary measurements.
\newblock {\em Computers and Mathematics with Applications}, 80:2135--2149,
  2020.

\bibitem{Nguyens:jiip2020}
P.~M. Nguyen and L.~H. Nguyen.
\newblock A numerical method for an inverse source problem for parabolic
  equations and its application to a coefficient inverse problem.
\newblock {\em Journal of Inverse and Ill-posed Problems}, 38:232--339, 2020.

\bibitem{Rakesh2020}
Rakesh and M.~Salo.
\newblock The fixed angle scattering problem and wave equation inverse problems
  with two measurements. data.
\newblock {\em Inverse Problems}, 36:035005, 2020.

\bibitem{Runde2008}
W.~Rundell.
\newblock Recovering an obstacle and a nonlinear conductivity from {C}auchy
  data.
\newblock {\em Inverse Problems}, 24(5):055015, 2008.

\bibitem{Serov2008}
V.~Serov and M.~Harju.
\newblock A uniqueness theorem and reconstruction of singularities for a
  two-dimensional nonlinear {S}chr{\"o}dinger equation.
\newblock {\em Nonlinearity}, 21(6):1323--1337, 2008.

\bibitem{TruongNguyenKlibanov:arxiv2020}
T.~Truong, D-L Nguyen, and M.~V. Klibanov.
\newblock Convexification numerical algorithm for a 2{D} inverse scattering
  problem with backscatter data.
\newblock {\em Inverse Probl. Sci. Eng.}, 29:2656--2675, 2021.

\bibitem{Wang2019}
Y.~Wang and T.~Zhou.
\newblock Inverse problems for quadratic derivative nonlinear wave equations.
\newblock {\em Comm. Partial Differential Equations}, 44:1140--1158, 2019.

\end{thebibliography}
\end{document}